\documentclass[11pt,twoside,reqno,centertags,draft]{amsart}
\usepackage{amsfonts}

\setcounter{page}{1}
  \usepackage{amsmath,amsthm,amsfonts,amssymb}
  \pagestyle{myheadings}
\thispagestyle{empty}
    \textwidth = 5 true in
    \textheight = 7.    true in

  \voffset= -20 true pt
  \oddsidemargin=0true in
  \evensidemargin=0true in

\begin{document}

\title{Multiple nodal solutions of nonlinear Choquard equations}
\date{}
\maketitle

\vspace{ -1\baselineskip}

{\small
\begin{center}
  {\sc  Zhihua Huang, Jianfu Yang  and Weilin Yu} \\
Department of Mathematics,
Jiangxi Normal University\\
Nanchang, Jiangxi 330022,
P.~R.~China\\
email:Z.Huang:zhhuang2016@126.com,  J.Yang: jfyang\_2000@yahoo.com and W. Yu: williamyu2065@163.com\\[10pt]
\end{center}
}

\renewcommand{\thefootnote}{}
\footnote{AMS Subject Classifications: 35J61, 35B33, 35B38, 35B65.}
\footnote{Key words: nonlinear Choquard equations, nodal solutions, nonlocal term.}

\begin{quote}
{\bf Abstract.} In this paper, we consider the existence of multiple nodal solutions of the nonlinear Choquard equation
\begin{equation*}
\left\{
\begin{aligned}
&-\Delta u+u=(|x|^{-1}\ast|u|^p)|u|^{p-2}u && \text{in $\mathbb{R}^3$,}\\
&u\in H^1(\mathbb{R}^3),\\
\end{aligned}
\right.\leqno{(P)}
\end{equation*}
where $p\in (\frac{5}{2},5)$. We show that for any positive integer $k$, problem $(P)$ has at least a radially symmetrical solution changing sign exactly $k$-times.
\end{quote}

\newcommand{\N}{\mathbb{N}}
\newcommand{\R}{\mathbb{R}}
\newcommand{\Z}{\mathbb{Z}}

\newcommand{\cA}{{\mathcal A}}
\newcommand{\cB}{{\mathcal B}}
\newcommand{\cC}{{\mathcal C}}
\newcommand{\cD}{{\mathcal D}}
\newcommand{\cE}{{\mathcal E}}
\newcommand{\cF}{{\mathcal F}}
\newcommand{\cG}{{\mathcal G}}
\newcommand{\cH}{{\mathcal H}}
\newcommand{\cI}{{\mathcal I}}
\newcommand{\cJ}{{\mathcal J}}
\newcommand{\cK}{{\mathcal K}}
\newcommand{\cL}{{\mathcal L}}
\newcommand{\cM}{{\mathcal M}}
\newcommand{\cN}{{\mathcal N}}
\newcommand{\cO}{{\mathcal O}}
\newcommand{\cP}{{\mathcal P}}
\newcommand{\cQ}{{\mathcal Q}}
\newcommand{\cR}{{\mathcal R}}
\newcommand{\cS}{{\mathcal S}}
\newcommand{\cT}{{\mathcal T}}
\newcommand{\cU}{{\mathcal U}}
\newcommand{\cV}{{\mathcal V}}
\newcommand{\cW}{{\mathcal W}}
\newcommand{\cX}{{\mathcal X}}
\newcommand{\cY}{{\mathcal Y}}
\newcommand{\cZ}{{\mathcal Z}}

\newcommand{\abs}[1]{\lvert#1\rvert}
\newcommand{\xabs}[1]{\left\lvert#1\right\rvert}
\newcommand{\norm}[1]{\lVert#1\rVert}

\newcommand{\loc}{\mathrm{loc}}
\newcommand{\p}{\partial}
\newcommand{\h}{\hskip 5mm}
\newcommand{\ti}{\widetilde}
\newcommand{\D}{\Delta}
\newcommand{\e}{\epsilon}
\newcommand{\bs}{\backslash}
\newcommand{\ep}{\emptyset}
\newcommand{\su}{\subset}
\newcommand{\ds}{\displaystyle}
\newcommand{\ld}{\lambda}
\newcommand{\vp}{\varphi}
\newcommand{\wpp}{W_0^{1,\ p}(\Omega)}
\newcommand{\ino}{\int_\Omega}
\newcommand{\bo}{\overline{\Omega}}
\newcommand{\ccc}{\cC_0^1(\bo)}
\newcommand{\iii}{\opint_{D_1}D_i}

\theoremstyle{plain}
\newtheorem{Thm}{Theorem}[section]
\newtheorem{Lem}[Thm]{Lemma}
\newtheorem{Def}[Thm]{Definition}
\newtheorem{Cor}[Thm]{Corollary}
\newtheorem{Prop}[Thm]{Proposition}
\newtheorem{Rem}[Thm]{Remark}
\newtheorem{Ex}[Thm]{Example}

\numberwithin{equation}{section}
\newcommand{\meas}{\rm meas}
\newcommand{\ess}{\rm ess} \newcommand{\esssup}{\rm ess\,sup}
\newcommand{\essinf}{\rm ess\,inf} \newcommand{\spann}{\rm span}
\newcommand{\clos}{\rm clos} \newcommand{\opint}{\rm int}
\newcommand{\conv}{\rm conv} \newcommand{\dist}{\rm dist}
\newcommand{\id}{\rm id} \newcommand{\gen}{\rm gen}
\newcommand{\opdiv}{\rm div}

\vskip 0.2cm \arraycolsep1.5pt
\newtheorem{Lemma}{Lemma}[section]
\newtheorem{Theorem}{Theorem}[section]
\newtheorem{Definition}{Definition}[section]
\newtheorem{Proposition}{Proposition}[section]
\newtheorem{Remark}{Remark}[section]
\newtheorem{Corollary}{Corollary}[section]

\section {Introduction}

In this paper, we consider the existence of multiple nodal solutions for the nonlinear Choquard equation
\begin{equation*}
\left\{
\begin{aligned}
&-\Delta u+u=(|x|^{-1}\ast|u|^p)|u|^{p-2}u && \text{in $\mathbb{R}^3$,}\\
&u\in H^1(\mathbb{R}^3)\\
\end{aligned}
\right.\leqno{(P)}
\end{equation*}
where $p\in (\frac{5}{2},5)$.

In the case $p=2$, equation $(P)$ is the Choquard-Pekar equation introduced by Pekar in \cite{P}, see also Section 2.1 in \cite{DA}, to describe the quantum theory of a polaron at rest and proposed by Choquard \cite{L} in the study of a certain approximation to Hartree-Fock theory for one component plasma.  Further physical consideration of $(P)$, known as the Schr\"{o}dinger-Poisson equation, can be found in \cite{J, MPT} as a model of self-gravitating matter and in \cite{Len} as a non-relativistic model of boson stars.

Mathematically, it is early around 1980's, nonlinear Choquard equation $(P)$ was studied in \cite{L, Lions, Lions1, M} by the variational method, and recently, this problem and its generalization have been attractive in researches.  Existence and qualitative properties of solutions have been investigated in \cite{CCS, CS, CSS, GS, MS,MS1,MS2} and references therein.
In particular, the existence of nodal solutions for the Choquard equation is an appealing aspect, this aspect is investigated in \cite{CCS1,CSS,ClapS, GS} etc by the variational method, that is, by seeking for critical points of the associated functional. The energy functional associated to the Choquard equation $(P)$ is defined for each $u$ in  $H^1(\mathbb{R}^3)$ by
\begin{equation}\label{eq:1.1}
I(u)=\frac{1}{2}\int_{\mathbb{R}^3}(|\nabla u|^2+|u|^2)dx-\frac{1}{2p}\int_{\mathbb{R}^3}\int_{\mathbb{R}^3}\frac{|u(x)|^p|u(y)|^p}{|x-y|}\,dxdy.
\end{equation}
By the Hardy-Littlewood-Sobolev inequality, the functional $I$ is well defined on $H^1(\mathbb{R}^3)$ if $p\in (\frac{5}{2},5)$. Hence, critical points of $I(u)$ are weak solutions of problem $(P)$, and necessarily contained in the Nehari  manifold
\[
\mathcal{N}=\{u\in H^1(\mathbb{R}^3)\big|u\neq0,\langle I'(u),u\rangle=0\}.
\]
A standard way to find critical points of $I$ is to seek for minimizers of the functional $I$ constraint on the Nehari manifold $\mathcal{N}$. This idea was used in \cite{GS} in constructing a sign-changing solution for the Choquard equation in an odd Nehari manifold. Another way to construct a nodal solution is to find a critical point of $I$ in the Nehari set
\[
\mathcal{N}_0=\{u\in H^1(\mathbb{R}^3)\big|u^{\pm}\neq0,\langle I'(u),u^{\pm}\rangle=0\}.
\]
However, $\mathcal{N}_0$ is not a manifold. The argument then among other things, lies in showing that there is a minimizer of $I$ constraint on $\mathcal{N}_0$, and verifying that the minimizer is a critical point of $I$. Using this approach, a sign-changing solution is constructed in \cite{GS} for the Choquard equation, and in \cite{AS,WZ} for the nonlinear Schr\"{o}dinger-Poisson system and in \cite{BS, FN} for the Kirchhoff equation, further results can found in references therein.

In this paper, we intend to show that for every fixed integer $k$, there exists a radial solution of problem $(P)$ which changes sign exactly $k$ times. Particularly,
for $k=2$, there is a radially sign changing solution of problem $(P)$.

For every integer $k\geq0$, it was proved in \cite{BW} and \cite{CZ} independently that,  there is a pair of solutions $u^\pm_k$ having exact $k$ nodes of
\begin{equation}\label{eq:1.2}
\left\{
\begin{aligned}
&-\Delta u+V(|x|)u=f(|x|,u)\,\,{\rm in}\,\,\mathbb{R}^N,\\
&u\in H^1(\mathbb{R}^N).\\
\end{aligned}
\right.
\end{equation}
Such solutions of \eqref{eq:1.2} are obtained by gluing solutions of the equation in each annulus, including every ball and the complement of it. However,
this approach cannot be applied directly to problems with nonlocal terms, because nonlocal terms need
the global information of $u$. This difficulty was overcome by regarding the problem as a system of $k+1$ equations with $k+1$ unknown functions $u_i$, each $u_i$ is supported on only one annulus and vanishes at the complement of it. This argument relies on, among other things, constructing a functional $E_k$ and a Nehari type manifold $\mathcal{N}_k$, then finding a minimizer of $E_k$ constraint on $\mathcal{N}_k$. In this way, Kim and Seok \cite{KS} found infinitely many nodal solutions for
Schr\"{o}dinger-Poisson system, and then Deng et at \cite{DPS} treated Kirchhoff problems in $\mathbb{R}^3$ in a similar way. However, this argument can not be simply carried out to deal with the Choquard equation $(P)$, because in the proof of $\mathcal{N}_k$ being a manifold for problems considered in \cite{DPS} and \cite{KS}, a key ingredient used is that the related matrix is diagonally dominant at each point of  $\mathcal{N}_k$, but this is not the case for the Choquard equation $(P)$. In this paper, we find a way to show that the matrix associated to our Nehari type set $\mathcal{N}_k$ is nonsingular, the fact eventually allows us to verify that $\mathcal{N}_k$ is a manifold. This method might be possible to apply to analogous problems.

Our main result in this paper is stated as follows.

\begin{Theorem}\label{thm:1.1}  Suppose $\frac{5}{2}<p<5$. For every positive integer $k$, there exists a radial solution of  $(P)$, which changes sign exactly $k$-times.
\end{Theorem}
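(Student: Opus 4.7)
The plan is to follow the framework outlined in the introduction: decompose a $k$-nodal radial solution as a sum $u = u_0 + u_1 + \cdots + u_k$ with alternating signs, where each $u_i$ is supported on a radial annulus $A_i$ (with $A_0$ a ball and $A_k$ an exterior region), and recast $(P)$ as a coupled system for the tuple $\vec u = (u_0, \ldots, u_k)$. In the space $X = (H^1_{\rm rad}(\mathbb{R}^3))^{k+1}$ I would consider
\[
E_k(\vec u) = \tfrac{1}{2}\sum_{i=0}^{k}\int_{\mathbb{R}^3}(|\nabla u_i|^2 + |u_i|^2)\,dx - \tfrac{1}{2p}\sum_{i,j=0}^{k} A_{ij}(\vec u),
\]
where $A_{ij}(\vec u) = \int\int \frac{|u_i(x)|^p|u_j(y)|^p}{|x-y|}\,dx\,dy$, so that $E_k(\vec u) = I(\sum_i u_i)$ whenever the $u_i$ have disjoint supports and alternating signs. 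The natural constraint is the Nehari-type set
\[
\mathcal{N}_k = \bigl\{\vec u \in X : u_i \not\equiv 0,\ {\rm supp}\,u_i \subset \overline{A_i},\ \|u_i\|^2 = \textstyle\sum_j A_{ij}(\vec u)\ \text{for all } i\bigr\},
\]
with the annular radii allowed to vary and the alternating sign pattern imposed; I would seek a minimizer of $E_k$ on $\mathcal{N}_k$.

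The main obstacle, as emphasized in the introduction, is verifying that $\mathcal{N}_k$ is a $C^1$ manifold: the usual diagonal-dominance argument fails here. Setting $f_i(\vec v) = \|v_i\|^2 - \sum_j A_{ij}(\vec v)$ and scaling componentwise via $\vec t\cdot\vec u = (t_0 u_0,\ldots,t_k u_k)$, a direct computation shows
\[
\frac{\partial f_i}{\partial t_j}\bigg|_{\vec t = \vec 1} = -p\,A_{ij}(\vec u) + (2-p)\,\delta_{ij}\,\|u_i\|^2 \qquad \text{for } \vec u \in \mathcal{N}_k,
\]
so the Jacobian is $M = -pA - (p-2)D$ with $A = (A_{ij}(\vec u))$ and $D = \mathrm{diag}(\|u_0\|^2,\ldots,\|u_k\|^2)$. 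The crucial observation would be that $A$ is positive semidefinite: for $\xi \in \mathbb{R}^{k+1}$, letting $\Phi = \sum_i \xi_i |u_i|^p$,
\[
\sum_{i,j}\xi_i\xi_j A_{ij}(\vec u) = \int\int \frac{\Phi(x)\Phi(y)}{|x-y|}\,dx\,dy \geq 0
\]
by positivity of the Riesz kernel (equivalently Hardy--Littlewood--Sobolev). Since $p > 2$ and $\|u_i\|^2 > 0$ on $\mathcal{N}_k$, the matrix $-(p-2)D$ is negative definite, hence $M$ is negative definite and in particular nonsingular. Via the implicit function theorem this shows $\mathcal{N}_k$ is a $C^1$ manifold, and a standard fiber analysis then supplies, for every admissible $\vec u$, a unique positive rescaling $\vec t(\vec u)$ projecting it onto $\mathcal{N}_k$ at which $E_k$ is maximized along the fiber.

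With the manifold framework in place, the minimization $c_k := \inf_{\mathcal{N}_k} E_k$ proceeds in the standard fashion. The Nehari identity yields $E_k(\vec u) = (\tfrac12 - \tfrac{1}{2p})\sum_i \|u_i\|^2$, so $c_k > 0$ and minimizing sequences $(\vec u^{(n)})$ are bounded in $X$. The radial compactness $H^1_{\rm rad}(\mathbb{R}^3) \hookrightarrow L^q(\mathbb{R}^3)$ for $q \in (2,6)$, combined with Hardy--Littlewood--Sobolev, permits passage to the limit in the $A_{ij}$ terms. A uniform lower bound $\|u^{(n)}_i\|^2 \geq \delta > 0$ coming from the Nehari identity via HLS and Sobolev prevents any component from degenerating, and the annular radii converge along a subsequence to a limiting admissible partition; the weak limit $\vec u^*$ then lies in $\mathcal{N}_k$ and attains $c_k$.

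Finally, $\vec u^*$ is a constrained critical point of $E_k$ on $\mathcal{N}_k$. Because the Jacobian $M$ at $\vec u^*$ is nonsingular, the Lagrange multipliers must vanish and $\vec u^*$ is a free critical point for variations of each $u_i^*$ supported in $\overline{A_i^*}$. Setting $u^* = \sum_{i=0}^k u_i^*$ with the alternating signs, the Euler--Lagrange equation on each open annulus reads $-\Delta u^* + u^* = (|x|^{-1} * |u^*|^p)|u^*|^{p-2} u^*$. Standard elliptic regularity together with a gluing argument across the separating spheres (which are negligible in the weak formulation) then produces a weak, hence by bootstrap classical, radial solution of $(P)$ that changes sign exactly $k$ times, establishing Theorem \ref{thm:1.1}.
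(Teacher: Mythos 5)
Your setup for a fixed family of annuli (the product space, the functional $E$, the Nehari-type set, the nonsingularity of the Jacobian via positivity of the Coulomb quadratic form, the minimization and the vanishing of the Lagrange multipliers) matches the paper's Section 2 and Appendix, and your positive-semidefiniteness argument for the matrix $A=(A_{ij})$ is a clean variant of the paper's Gram-matrix computation. The genuine gap is in your last paragraph. Each component $u_i^*$ solves a Dirichlet problem on its own annulus, so it satisfies the equation only against test functions vanishing on the separating spheres. The glued function $u^*=\sum_i u_i^*$ is continuous across the spheres, but its distributional Laplacian carries a singular part supported on each sphere $\{|x|=r_i\}$, proportional to the jump of the radial derivative of $u^*$ there; the spheres are therefore not ``negligible in the weak formulation'', and $u^*$ is a weak solution of $(P)$ only if these radial derivatives match. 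Nothing in your construction for a fixed partition forces this matching, so the proof as written does not produce a solution of $(P)$.

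The paper closes exactly this gap by a second minimization over the partition. It sets $\psi(\mathbf{r}_k)=E^{\mathbf{r}_k}(w_1^{\mathbf{r}_k},\cdots,w_{k+1}^{\mathbf{r}_k})$, proves (Lemma 3.1) that $\psi\to+\infty$ when two consecutive radii coalesce or when $r_k\to\infty$ and that $\psi$ is continuous, so $\psi$ attains its infimum at some $\bar{\mathbf{r}}_k\in\mathbf{\Gamma}_k$; then, assuming a derivative mismatch $w_-\neq w_+$ at some $\bar r_l$, it performs a local surgery (linear interpolation on $(\bar r_l-\delta,\bar r_l+\delta)$, relocation of the zero to $\bar s_l$, projection onto the Nehari set of the perturbed partition) and expands the energy to obtain $E(Z)-E(W)=-\frac{\delta}{4}\bar r_l^2(w_+-w_-)^2+o(\delta)<0$, contradicting the minimality of $\bar{\mathbf{r}}_k$. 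Your proposal gestures at letting the radii vary, but it neither shows that a minimizing choice of radii stays away from the degenerate configurations (the content of Lemma 3.1(i)--(ii)) nor supplies any mechanism, such as this energy-comparison surgery, that converts optimality in the radii into $C^1$ matching at the interfaces. These steps are the heart of Section 3 and are missing from your argument; the rest of your outline is essentially the paper's.
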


Theorem \ref{thm:1.1} will be proved by the variational method. We will define in Section 2 a functional $E=E(u_1,\cdots,u_{k+1})$ on  $\mathcal{H}_k=H_1\times \cdots \times H_{k+1}$, where $H_i$ are Hilbert spaces for $i=1,\cdots,k+1$. Then, we consider the variational problem
\[
E_{min} = \inf_{(u_1,\cdots,u_{k+1})\in\mathcal{N}_k}E(u_1,\cdots,u_{k+1}),
\]
where
\[
\mathcal{N}_k=\big\{(u_{1},\cdots,u_{k+1})\in \mathcal{H}_k\ \big|\ u_{i}\neq 0,\partial_{u_{i}}E(u_{1},\cdots,u_{k+1})u_{i}=0\,\,{\rm for\,\, each}\,\,i.\big\}
\]
is a Nehari type set. It will be shown that each component of a minimizer $(u_1,\cdots,u_{k+1})\in \mathcal{N}_k$ of $E_{min}$ is a solution of the problem on decomposed regions. Hence, it is necessary to verify
that $\mathcal{N}_k$ is a manifold, where a difficulty arises. Nodal solutions of problem $(P)$ will be constructed by gluing each component of a minimizer $(u_1,\cdots,u_{k+1})\in \mathcal{N}_k$ of $E_{min}$ together.

This paper is organized as follows. In Section 2, we present variational framework to deal with problem $(P)$ and find a minimizer of the related minimization problem. Nodal solutions of problem $(P)$ will be constructed in Section 3.

\section {Preliminaries}

\bigskip

In this section, we present the variational framework and modify the energy functional $I$ to a functional corresponding to a system of $(k+1)$-equations.
For each $k\in\mathbb{N}_+ $, we define
\[
\mathbf{\Gamma}_k:=\big\{\mathbf{r}_k=(r_1,\cdots,r_k)\in \mathbb{R}^k\ \big| \ 0=r_0<r_1<\cdots<r_k<r_{k+1}=\infty\big\},
\]
and denote
\[
B_1=B_1^{\mathbf{r}_k}=\{x\in\mathbb{R}^3 : 0\leq|x|<r_1\}
\]
and
\[
B_i=B_i^{\mathbf{r}_k}=\{x\in\mathbb{R}^3: r_{i-1}<|x|<r_i\}
\]
for $i=2,\cdots,k+1$. Therefore, $B_1$ is a ball, $B_2,\cdots,B_k$ are annuli and $B_{k+1}$ is the complement of a ball.  Fix $\mathbf{r}_k=(r_1,\cdots,r_k)\in\mathbf{\Gamma}_k$ and thereby a family of $\{B_i\}_{i=1}^{k+1}$, we denote
\[
H_i:=\big\{u\in H_0^1(B_i)\ \big|\ u(x)=u(|x|),u(x)=0\ {\rm if}\ x\notin B_i\big\}
\]
for $i=1,\cdots,k+1$. It can be verified that $ H_i$ is a Hilbert space with the norm
\[
\|u\|_i^2=\int_{B_i}(|\nabla u|^2+u^2)dx.
\]
Let $\mathcal{H}_k=H_1\times \cdots \times H_{k+1}$. We define the functional $E:\mathcal{H}_k\rightarrow \mathbb{R}$ by
\begin{equation}\label{eq:2.1}
\begin{aligned}
E(u_1,\cdots,u_{k+1}):=&\frac{1}{2}\sum_{i=1}^{k+1}\|u_i\|_i^2
            -\frac{1}{2p}\sum_{i=1}^{k+1}\int_{B_i}\int_{B_i}\frac{|u_i(x)|^p|u_i(y)|^p}{|x-y|}dxdy\\
            &-\frac{1}{2p}\sum_{j\neq i}^{k+1}\int_{B_i}\int_{B_j}\frac{|u_i(x)|^p|u_j(y)|^p}{|x-y|}dxdy,
\end{aligned}
\end{equation}
where $u_i\in H_i$,  $i=1,\cdots,k+1$. It is obvious that
\[
E(u_1,\cdots,u_{k+1})=I(\sum_{i=1}^{k+1}u_i).
\]
Moreover, if $(u_1,\cdots,u_{k+1})\in \mathcal{H}_k$ is a critical point of $E$, then each component $u_i$ satisfies
\begin{equation*}
\left\{
\begin{aligned}
&-\Delta u_i+u_i=(|x|^{-1}\ast|\sum_{i=1}^{k+1}u_i|^p)|u_i|^{p-2}u_i ,&& x\in B_i\\
&u_i=0 ,&&x\notin B_i\\
\end{aligned}
\right.\leqno{(P_i)}
\end{equation*}
Nodal solutions of problem $(P)$ will be constructed by gluing solutions of problem $(P_i), i=1,\cdots,k+1$ up.
In order to find critical points of $E$ with nonzero component, we consider the minimization problem
\begin{equation}\label{eq:2.2}
E_{min} = \inf_{(u_1,\cdots,u_{k+1})\in\mathcal{N}_k}E(u_1,\cdots,u_{k+1})
\end{equation}
constrained on the Nehari type set
\[
\begin{split}
\mathcal{N}_k = &\big\{(u_{1},\cdots,u_{k+1})\in \mathcal{H}_k\,\,\big|u_{i}\neq 0,\partial_{u_{i}}E(u_{1},\cdots,u_{k+1})u_{i}=0,\\
&\,\,\, i=1,\cdots,k+1\big\},\\
\end{split}
\]
where
\[
\begin{split}
&\partial_{u_{i}}E(u_{1},\cdots,u_{k+1})u_{i} \\
= &\|u_{i}\|_{i}^{2}-\int_{B_i}\int_{B_i}\frac{|u_i(x)|^p|u_i(y)|^p}{|x-y|}\,dxdy -\sum_{j\neq i}^{k+1}\int_{B_i}\int_{B_j}\frac{|u_i(x)|^p|u_j(y)|^p}{|x-y|}\,dxdy.\\
\end{split}
\]
It is necessary to show that the set $\mathcal{N}_k$ is nonempty, and then $E_{min}$ is well defined. We know that a minimizer $u$ of $E_{min}$ is a critical point of $E_{min}$ constrained on $\mathcal{N}_k$ if $\mathcal{N}_k$ is a manifold in $\mathcal{H}_k$, hence, each component $u$ is possibly a solution of problem $(P_i)$. In this section, we will prove these facts, and find a solution of problem $(P_i)$ for each $i$. We commence with proving the set $\mathcal{N}_k$ is nonempty.

\begin{Lemma}\label{lem:2.1}
Assume that $p\in(\frac{5}{2},5)$. For $(u_{1},\cdots,u_{k+1})\in \mathcal{H}_k$ with $u_{i}\neq0$ for $i=1,\cdots,k+1$, there is a unique $(k+1)$-tuple $(t_{1},\cdots,t_{k+1})$ of positive numbers such that $(t_{1}u_{1},\cdots,t_{k+1}u_{k+1}) \in \mathcal{N}_k$.
\end{Lemma}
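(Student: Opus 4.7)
Write $a_i:=\|u_i\|_i^2>0$ and $A_{ij}:=\int_{B_i}\int_{B_j}|u_i(x)|^p|u_j(y)|^p/|x-y|\,dxdy>0$, and note $A_{ij}=A_{ji}$ by symmetry. With all $t_i>0$, the condition $\partial_{u_i}E(t_1u_1,\ldots,t_{k+1}u_{k+1})(t_iu_i)=0$ from the definition of $\mathcal{N}_k$, once expanded and divided by $t_i^2$, reduces to the algebraic system
\begin{equation*}
a_i \;=\; t_i^{2p-2}A_{ii} \;+\; t_i^{p-2}\sum_{j\neq i}t_j^p A_{ij}, \qquad i=1,\ldots,k+1,
\end{equation*}
so the lemma amounts to showing that this system admits a unique positive solution $(t_1,\ldots,t_{k+1})$.

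The main obstacle is the coupling between the equations: a naive comparison of two candidate solutions $\mathbf{t},\mathbf{s}$ via $\lambda:=\max_i t_i/s_i$ yields only the tautological $\lambda\geq 1$. To break the symmetry, I pass to the new variables $x_i:=t_i^p$, which transforms the above system into
\begin{equation*}
a_i \;=\; x_i^{q}\,(A\mathbf{x})_i, \qquad q:=1-\tfrac{2}{p}\in(0,1),\qquad A:=(A_{ij});
\end{equation*}
these are precisely the Euler--Lagrange equations on $(0,\infty)^{k+1}$ of the functional
\begin{equation*}
\Psi(\mathbf{x}) \;:=\; -\,\frac{1}{1-q}\sum_{i=1}^{k+1}a_i\,x_i^{1-q} \;+\; \frac{1}{2}\,\mathbf{x}^{\top}A\,\mathbf{x}.
\end{equation*}
The key point is that $A$ is strictly positive definite: for any $v\neq 0$, set $g:=\sum_i v_i|u_i|^p$; then $g\not\equiv 0$ (since $u_i$ is nonzero on $B_i$ and the $B_i$'s are pairwise disjoint), and
$v^{\top}Av=\iint g(x)g(y)/|x-y|\,dxdy>0$
by strict positive definiteness of the Coulomb kernel on $\mathbb{R}^3$. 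Consequently the Hessian $D^2\Psi=\mathrm{diag}(q a_k x_k^{-q-1})+A$ is positive definite on the whole open positive octant, so $\Psi$ is strictly convex there.

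To finish, the quadratic term forces $\Psi(\mathbf{x})\to+\infty$ as $|\mathbf{x}|\to\infty$ (since $2>1-q$), while the sublinear negative term makes $\partial_k\Psi(\mathbf{x})\to-\infty$ as $x_k\to 0^+$ through the $-a_k x_k^{-q}$ contribution; a straightforward coercivity argument combined with this boundary blow-up shows that $\Psi$ attains its global infimum at an interior point $\mathbf{x}^{\ast}\in(0,\infty)^{k+1}$, and strict convexity makes this the unique critical point. Setting $t_i:=(x_i^{\ast})^{1/p}$ then gives the unique positive $(k+1)$-tuple promised by the lemma.
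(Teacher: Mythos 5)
Your proposal is correct, and it takes a genuinely different route from the paper. The paper proves the lemma by a continuation argument: it inserts a parameter $\mu\in[0,1]$ in front of the off-diagonal coupling terms, starts from the decoupled case $\mu=0$, and shows the set of $\mu$ with unique positive solvability is open (implicit function theorem, whose applicability rests on the nonsingularity of the Jacobian matrix $M$, proved in the appendix by writing the Coulomb interactions as a Gram matrix and invoking positive definiteness) and closed (a priori upper and lower bounds via Hardy--Littlewood--Sobolev). You instead observe that, after the substitution $x_i=t_i^p$, the Nehari system becomes the Euler--Lagrange equations of $\Psi(\mathbf{x})=-\frac{1}{1-q}\sum_i a_i x_i^{1-q}+\frac12\mathbf{x}^\top A\mathbf{x}$ with $q=1-\frac2p\in(0,1)$, which is coercive on the closed positive orthant and strictly convex on the open orthant (the diagonal Hessian contribution $qa_ix_i^{-q-1}$ is already positive, and $A$ is positive definite by exactly the Coulomb/Gram-matrix fact the paper isolates in its appendix, using the disjoint supports of the $u_i$); existence then follows by minimizing over the closed orthant and excluding boundary minimizers through the dominant $-\epsilon^{1-q}$ gain when pushing a vanishing component inward, and uniqueness follows from strict convexity. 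Your argument is shorter and delivers existence and uniqueness in one stroke, exploiting the exact power homogeneity that makes the system variational after the change of variables; the paper's longer route has the side benefit that the nonsingularity of the matrices $M$ and $N$, established for the implicit function theorem, is reused verbatim in Lemma 2.2 to show $\mathcal{N}_k$ is a manifold, so your convexity observation replaces only this lemma, not that later step. The only points worth spelling out in a final write-up are the finiteness of the entries $A_{ij}$ (Hardy--Littlewood--Sobolev with $|u_i|^p\in L^{6/5}$) and the boundary-exclusion step phrased as a perturbation $\mathbf{x}^\ast+\epsilon e_k$ of a minimizer rather than as blow-up of $\partial_k\Psi$; both are routine.
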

\begin{proof}
Fix $(u_{1},\cdots,u_{k+1})\in \mathcal{H}_k$ with $u_{i}\neq0$,  $i=1,\cdots,k+1$.  Then, $(t_{1}u_{1},\cdots,t_{k+1}u_{k+1})\in \mathcal{N}_k$ for some $(t_{1},\cdots,t_{k+1})\in (\mathbb{R}_{>0})^{k+1}$ if and only if
\begin{equation}\label{eq:2.3}
\begin{aligned}
t_i^2\|u_{i}\|_{i}^{2}&-t_i^{2p}\int_{B_i}\int_{B_i}\frac{|u_i(x)|^p|u_i(y)|^p}{|x-y|}\,dxdy\\
&-\sum_{j\neq i}^{k+1}\int_{B_i}\int_{B_j}\frac{t_i^pt_j^p|u_i(x)|^p|u_j(y)|^p}{|x-y|}\,dxdy=0
\end{aligned}
\end{equation}
for $i=1,\cdots,k+1$. Hence, the problem is reduced to verify that there is only one solution $(t_{1},\cdots,t_{k+1})$ of system \eqref{eq:2.3} with $t_i>0$, for each $i=1,\cdots,k+1$.
To this end, we introduce a parameter $0\leq\mu\leq1$, and consider the solvability of the following system of $(k+1)$ equations
\begin{equation}\label{eq:2.4}
\begin{aligned}
&G_i(t_1,\cdots,t_{k+1}):=t_i^2\|u_{i}\|_{i}^{2}-t_i^{2p}\int_{B_i}\int_{B_i}\frac{|u_i(x)|^p|u_i(y)|^p}{|x-y|}\,dxdy\\
&-\mu\sum_{j\neq i}^{k+1}\int_{B_i}\int_{B_j}\frac{t_i^pt_j^p|u_i(x)|^p|u_j(y)|^p}{|x-y|}\,dxdy=0,\ i=1,\cdots,k+1.
\end{aligned}
\end{equation}

Let
\begin{equation}\label{eq:2.5}
\mathcal{Z}=\big\{\mu\ \big|\ 0\leq\mu\leq1 \ {\rm and} \ \eqref{eq:2.4}\  {\rm is}\ {\rm uniquely}\ {\rm solvable}\ {\rm in}\ (\mathbb{R}_{>0})^{k+1}\big\}.
\end{equation}
Apparently, $0\in \mathcal{Z}$, so the set $\mathcal{Z}$ is nonempty in $[0,1]$.  We claim that $\mathcal{Z}= [0,1]$, which implies the result.
To prove the claim, it is sufficient to show that $\mathcal{Z}$ is both open and closed in $[0,1]$.

We first prove that the set $\mathcal{Z}$ is open in $[0,1]$.

Suppose that $\mu_0\in\mathcal{Z}$ and $(\bar{t}_{1},\cdots,\bar{t}_{k+1})\in(\mathbb{R}_{>0})^{k+1}$ is the unique solution of \eqref{eq:2.4} with $\mu=\mu_0$. In order to apply the implicit function theorem at $\mu_0$, we calculate the matrix
\begin{equation}\label{eq:2.5a}
M=(M_{ij})=(\partial_{t_j}G_i)_{i,j=1,\cdots,k+1}.
\end{equation}
Each component of the matrix $M$ is then given by
\[
\begin{aligned}
M_{ii}&=2\bar{t}_i\|u_{i}\|_{i}^{2}-2p\bar{t}^{2p-1}_i\int_{B_i}\int_{B_i}\frac{|u_i(x)|^p|u_i(y)|^p}{|x-y|}\,dxdy\\
&\quad-\mu_0p\bar{t}^{p-1}_i\sum_{j\neq i}^{k+1}\int_{B_i}\int_{B_j}\frac{\bar{t}_j^p|u_i(x)|^p|u_j(y)|^p}{|x-y|}\,dxdy\\
&=(2-p)\bar{t}_i\|u_{i}\|_{i}^{2}-p\bar{t}^{2p-1}_i\int_{B_i}\int_{B_i}\frac{|u_i(x)|^p|u_i(y)|^p}{|x-y|}\,dxdy
\end{aligned}
\]
for $i=1,\cdots,k+1$, where we have used \eqref{eq:2.4}, and
\[
M_{ij}=-\mu_0p\bar{t}^{p}_i\bar{t}^{p-1}_j\int_{B_i}\int_{B_j}\frac{|u_i(x)|^p|u_j(y)|^p}{|x-y|}dxdy
\]
for $i\neq j$, $i,j=1,\cdots,k+1$. Therefore,
\begin{equation}\label{eq:2.7}
{\rm det}\ M=\frac{(-1)^{k+1}}{\bar{t}_1\cdots \bar{t}_{k+1}}\ {\rm det}\ \widetilde{M},
\end{equation}
where components of the matrix $\widetilde{M}=(\widetilde{M}_{ij})$ are given by
\[
\widetilde M_{ii}=(p-2)\bar{t}_i^2\|u_{i}\|_{i}^{2}+p\bar{t}^{2p}_i\int_{B_i}\int_{B_i}\frac{|u_i(x)|^p|u_i(y)|^p}{|x-y|}\,dxdy
\]
for $i=1,\cdots,k+1$, and
\[
\widetilde M_{ij}=\mu_0p\bar{t}^{p}_i\bar{t}^{p}_j\int_{B_i}\int_{B_j}\frac{|u_i(x)|^p|u_j(y)|^p}{|x-y|}\,dxdy,\ {\rm for}\ i\neq j,\  i,j=1,\cdots,k+1.
\]
By Lemma \ref{lem:A} in the appendix, we obtain
\[
{\rm det}\ M\neq0.
\]
Hence,  the implicit function theorem implies that there are an open neighborhood $U_0$ of $\mu_0$ and a neighborhood $ A_0 \subset (\mathbb{R}_{>0})^{k+1}$ of $(\bar{t}_{1},\cdots,\bar{t}_{k+1})$ such that system \eqref{eq:2.4} is uniquely solvable in $U_0\times A_0$.

Now we show \eqref{eq:2.4} is uniquely solvable in $ U_0\times  (\mathbb{R}_{>0})^{k+1}$, this means $U_0\subset \mathcal{Z}$, and $\mathcal{Z}$ is open.
Suppose, on the contrary, that there is $\mu_1 \in U_0 $ such that there exists the second solution  $(\tilde{t}_{1},\cdots,\tilde{t}_{k+1})\in (\mathbb{R}_{>0})^{k+1}\setminus A_0$ of  \eqref{eq:2.4}. By the implicit function theorem, we can find a solution curve $(\mu,(\tilde{t}_{1}(\mu),\cdots,\tilde{t}_{k+1}(\mu)))$ in $(\mu_1-\varepsilon,\mu_1+\varepsilon)\times\big((\mathbb{R}_{>0})^{k+1}\setminus A_0\big)$.
If $\mu_0<\mu_1$, we extend this curve as much as possible. Since it cannot be defined at $\mu_0$ and enter into $U_0\times A_0$, there should have a point $\mu_2\in [\mu_0,\mu_1)$ such that $(t_1(\mu),\cdots,t_{k+1}(\mu))$ being defined in $(\mu_2,\mu_1]$ and blowing up as $\mu\rightarrow \mu_2^+ $. However, this is impossible, since if $(t_1,\cdots,t_{k+1})$ has sufficiently large norm, the left-hand side of  \eqref{eq:2.4} is strictly negative for at least one $ i $. This gives a contradiction. Thus, $U_0\subset \mathcal{Z}$. The case $\mu_0 >\mu_1 $ can be proved in the same way.

Next, we show that the set $\mathcal{Z}$ is closed in $[0,1]$.

Let $\{\mu_n\}$ be a sequence in $\mathcal{Z}$ converging to $\mu_0\in[0,1]$ and $(t_{1}^n,\cdots,t_{k+1}^n)\in(\mathbb{R}_{>0})^{k+1}$ be the solution of \eqref{eq:2.4} for $\mu_n$. By the preceding argument, we see that the sequence $(t_{1}^n,\cdots,t_{k+1}^n)$ is bounded above. Thus we may assume that $(t_{1}^n,\cdots,t_{k+1}^n)$ converges to a solution $(t_{1}^0,\cdots,t_{k+1}^0)\in(\mathbb{R}_{\geq0})^{k+1}$ of \eqref{eq:2.4} for $\mu_0$. Let $v^n=t_1^n u_1+\cdots+t^n_{k+1}u_{k+1}$. Since $\{v_n\}$ is uniformly bounded in $\mathcal{H}_k$, by \eqref{eq:2.4} and the Hardy-Littlewood-Sobolev inequality, we derive
\begin{equation}\label{eq:2.8}
\begin{aligned}
(t_i^n)^2\|u_{i}\|_{i}^{2}&=(t_i^n)^{2p}\int_{B_i}\int_{B_i}\frac{|u_i(x)|^p|u_i(y)|^p}{|x-y|}\,dxdy\\
&\quad+\mu_n\sum_{j\neq i}^{k+1}\int_{B_i}\int_{B_j}\frac{(t_i^n)^p(t_j^n)^p|u_i(x)|^p|u_j(y)|^p}{|x-y|}\,dxdy\\
&\leq(t_i^n)^{2p}\int_{B_i}\int_{B_i}\frac{|u_i(x)|^p|u_i(y)|^p}{|x-y|}dxdy\\
&\quad+\sum_{j\neq i}^{k+1}\int_{B_i}\int_{B_j}\frac{(t_i^n)^p(t_j^n)^p|u_i(x)|^p|u_j(y)|^p}{|x-y|}\,dxdy\\
&=\int_{B_i}\int_{\mathbb{R}^3}\frac{|t^n_iu_i(x)|^p|v^n(y)|^p}{|x-y|}dxdy\\
&\leq C_1(t_i^n)^p\|u_{i}\|_{\frac{6p}{5}}^{p}\|v^n\|_{\frac{6p}{5}}^{p}\leq C_2(t_i^n)^p\|u_{i}\|_{i}^{p}.
\end{aligned}
\end{equation}
This implies that $0<C_i<t_i^n$ holds uniformly in $n$. As a result, $t_i^0\geq C_i>0$ for $i=1,\cdots,k+1$, that is, $(t_{1}^0,\cdots,t_{k+1}^0)\in(\mathbb{R}_{>0})^{k+1}$.
By the implicit function theorem again,  $(t_{1}^0,\cdots,t_{k+1}^0)$ is the unique solution of \eqref{eq:2.4} in $(\mathbb{R}_{>0})^{k+1}$. Hence, $\mathcal{Z}$ is closed.
The conclusion of Lemma \ref{lem:2.1} then follows.

\end{proof}

\begin{Lemma}\label{lem:2.2}
For any $\frac{5}{2}<p<5$, $\mathcal{N}_k$ is a differentiable manifold in $\mathcal{H}_k$. Moreover, all critical points of the restriction $E\big|_{\mathcal{N}_k}$ of $E$ to $\mathcal{N}_k$ are critical points of $E$ with no zero component.
\end{Lemma}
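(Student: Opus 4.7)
The plan is to realize $\mathcal{N}_k$ as the regular zero set of a $C^1$ constraint map on an open subset of $\mathcal{H}_k$, so that the submersion theorem supplies both the manifold structure and, via Lagrange multipliers, the second assertion. Let
\[
\mathcal{U}=\big\{(u_1,\ldots,u_{k+1})\in\mathcal{H}_k : u_i\neq 0\ \text{for each } i\big\},
\]
which is open in $\mathcal{H}_k$, and define $F=(F_1,\ldots,F_{k+1}):\mathcal{U}\to\mathbb{R}^{k+1}$ by
\[
F_i(u_1,\ldots,u_{k+1})=\partial_{u_i}E(u_1,\ldots,u_{k+1})u_i,
\]
so that $\mathcal{N}_k=F^{-1}(0)\cap\mathcal{U}$. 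The Hardy--Littlewood--Sobolev inequality ensures that $E$, and hence $F$, is of class $C^2$ on $\mathcal{H}_k$; what remains is to verify that $DF(u)$ is surjective at every $u\in\mathcal{N}_k$.

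To check surjectivity I will evaluate $DF(u)$ against the $k+1$ distinguished directions $v^{(j)}=(0,\ldots,0,u_j,0,\ldots,0)\in\mathcal{H}_k$ and analyze the resulting $(k+1)\times(k+1)$ matrix $A=(A_{ij})$ with $A_{ij}=DF_i(u)[v^{(j)}]$. A direct differentiation, combined with the identity $F_i(u)=0$ used to simplify the diagonal entries exactly as in the proof of Lemma~\ref{lem:2.1}, shows that
\[
A_{ii}=(2-p)\|u_i\|_i^2-p\int_{B_i}\int_{B_i}\frac{|u_i(x)|^p|u_i(y)|^p}{|x-y|}\,dxdy,\qquad
A_{ij}=-p\int_{B_i}\int_{B_j}\frac{|u_i(x)|^p|u_j(y)|^p}{|x-y|}\,dxdy\ (i\neq j),
\]
so $A$ coincides with the matrix $M$ of \eqref{eq:2.5a} specialized to $\bar t_i=1$ and $\mu_0=1$. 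Invoking Lemma~\ref{lem:A} in the appendix then gives $\det A\neq 0$, whence $DF(u)$ is surjective and the submersion theorem makes $\mathcal{N}_k$ a $C^1$-submanifold of $\mathcal{U}$ of codimension $k+1$.

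For the second claim, suppose $u\in\mathcal{N}_k$ is a critical point of $E\big|_{\mathcal{N}_k}$. The Lagrange multiplier rule produces $\lambda_1,\ldots,\lambda_{k+1}\in\mathbb{R}$ with $DE(u)=\sum_{i=1}^{k+1}\lambda_i\,DF_i(u)$ in $\mathcal{H}_k^\ast$. Testing against each $v^{(j)}$ yields $DE(u)[v^{(j)}]=\partial_{u_j}E(u)u_j=0$ on the left-hand side (since $u\in\mathcal{N}_k$), and $(A^\top\lambda)_j$ on the right, so $A^\top\lambda=0$. The nonsingularity of $A$ forces $\lambda=0$, whence $DE(u)=0$ and $u$ is an honest critical point of $E$; the nonzero-component property is built into the definition of $\mathcal{N}_k$.

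The only delicate step is the nonsingularity of $A$. As the Introduction emphasizes, $A$ fails to be diagonally dominant in the Choquard setting, so its invertibility cannot be read off as in \cite{DPS,KS}; this is exactly the obstruction that Lemma~\ref{lem:A} is designed to overcome, and is the reason the appendix is needed.
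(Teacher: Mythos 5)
Your proof is correct and takes essentially the same approach as the paper: you use the same constraint map $\mathbf{F}$, compute the same matrix of derivatives tested against the directions $(0,\ldots,0,u_j,0,\ldots,0)$ (this is exactly the matrix $N$ of \eqref{eq:2.10}, up to transposition), invoke Lemma~\ref{lem:A} for its nonsingularity, and run the identical Lagrange-multiplier argument. The only cosmetic difference is at the end: the paper additionally notes, via the estimate \eqref{eq:2.8}, that components of elements of $\mathcal{N}_k$ are uniformly bounded away from zero, whereas you simply observe that the nonzero-component property is built into the definition of $\mathcal{N}_k$, which suffices for the statement as phrased.
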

\begin{proof} We show that $\mathcal{N}_k$ is a manifold first. We may write
$$
\mathcal{N}_k=\{(u_{1},\cdots,u_{k+1})\in \mathcal{H}_k\ \big|\ u_{i}\neq0,\mathbf{F}(u_{1},\cdots,u_{k+1})=\mathbf{0}\},
$$
where $\mathbf{F}=(F_1,\cdots,F_{k+1}):\mathcal{H}_k\rightarrow \mathbb{R}^{k+1}$ is given by
\begin{equation}\label{eq:2.9}
\begin{aligned}
F_i(u_1,\cdots,u_{k+1})=&\|u_{i}\|_{i}^{2}-\int_{B_i}\int_{B_i}\frac{|u_i(x)|^p|u_i(y)|^p}{|x-y|}\,dxdy\\
&-\sum_{j\neq i}^{k+1}\int_{B_i}\int_{B_j}\frac{|u_i(x)|^p|u_j(y)|^p}{|x-y|}\,dxdy
\end{aligned}
\end{equation}
for $i=1,\cdots,k+1$.

In order to prove that $\mathcal{N}_k$ is a differentiable manifold in $\mathcal{H}_k$, it suffices to check that the matrix
\begin{equation}\label{eq:2.10}
N:=(N_{ij})=\big((\partial_{u_i}F_j(u_1,\cdots,u_{k+1}), u_i)\big)_{i,j=1,\cdots,k+1}
\end{equation}
is nonsingular at each point $(u_1,\cdots,u_{k+1})\in \mathcal{N}_k$, since it implies that $0$ is a regular value of $\mathbf{F}$. By direct computation, we have
\[
\begin{aligned}
N_{ii}&=2\|u_{i}\|_{i}^{2}-2p\int_{B_i}\int_{B_i}\frac{|u_i(x)|^p|u_i(y)|^p}{|x-y|}\,dxdy\\
&\quad-p\sum_{j\neq i}^{k+1}\int_{B_i}\int_{B_j}\frac{|u_i(x)|^p|u_j(y)|^p}{|x-y|}\,dxdy\\
&=(2-p)\|u_{i}\|_{i}^{2}-p\int_{B_i}\int_{B_i}\frac{|u_i(x)|^p|u_i(y)|^p}{|x-y|}\,dxdy,
\end{aligned}
\]
for $i=1,\cdots,k+1$, and
\[
N_{ij}=-p\int_{B_i}\int_{B_j}\frac{|u_i(x)|^p|u_j(y)|^p}{|x-y|}dxdy,
\]
for $i\neq j$ and $i,j=1,\cdots,k+1$. By Lemma \ref{lem:A}, we may verify as the proof  of Lemma \ref{lem:2.1} that $det\ N\neq0$ at each point of $\mathcal{N}_k$. So $\mathcal{N}_k$ is a differentiable manifold in $\mathcal{H}_k$.

Next, we verify that any critical point $(u_1,\cdots,u_{k+1})$ of $E\big|_{\mathcal{N}_k}$ is a critical point of $E$. Indeed, if $(u_1,\cdots,u_{k+1})$ is a critical point of $E\big|_{\mathcal{N}_k}$, then there are Lagrange multipliers $\lambda_1,\cdots,\lambda_{k+1}$ such that
\begin{equation}\label{eq:2.11}
\lambda_1F'_1(u_1,\cdots,u_{k+1})+\cdots+\lambda_{k+1}F'_{k+1}(u_1,\cdots,u_{k+1})=E'(u_1,\cdots,u_{k+1}).
\end{equation}
The values of the operator identity \eqref{eq:2.11} at points
\[
(u_1,0,\cdots,0), (0,u_2,0,\cdots,0),\cdots, (0,\cdots,0,u_{k+1})
\]
form a system
\[
N\begin{pmatrix} \lambda_1  \\ \vdots \\ \lambda_{k+1} \end{pmatrix}=\begin{pmatrix}\ 0\  \\ \vdots \\ \ 0\ \end{pmatrix}.
\]
Since the matrix $N$ is nonsingular at each point of $\mathcal{N}_k$,  $\lambda_1,\cdots,\lambda_{k+1}$ are all zero and $(u_1,\cdots,u_{k+1})$ is a critical point of $E$.

Finally, for any $(u_1,\cdots,u_{k+1})\in{\mathcal{N}_k}$, we may derive as inequality \eqref{eq:2.8} that each $u_i$ is bounded away from zero. Thus, critical points of $E$ in ${\mathcal{N}_k}$ cannot have any zero component. The proof is complete.
\end{proof}

For a fixed $(u_1,\cdots,u_{k+1})\in \mathcal{H}_k$ with nonzero component,  by Lemma \ref{lem:2.1} there exists a unique vector $(t_{1},\cdots,t_{k+1})$  such that $(t_1u_1,\cdots,t_{k+1}u_{k+1})\in \mathcal{N}_k$. The vector $(t_{1},\cdots,t_{k+1})$ has the following property.

\begin{Lemma}\label{lem:2.3}
 The vector $(t_{1},\cdots,t_{k+1})$ is the unique maximum point of the function $\phi:(\mathbb{R}_{>0})^{k+1}\rightarrow \mathbb{R}$ defined as
 $$
 \phi(c_1,\cdots,c_{k+1})=E(c_{1}u_{1},\cdots,c_{k+1}u_{k+1}).
 $$
\end{Lemma}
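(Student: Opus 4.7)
The plan is to exploit the correspondence between critical points of $\phi$ in the open octant $(\mathbb{R}_{>0})^{k+1}$ and elements of $\mathcal{N}_k$ of the form $(c_1u_1,\cdots,c_{k+1}u_{k+1})$: the equation $\partial_{c_i}\phi=0$ is, after multiplication by $c_i$, exactly the defining equation \eqref{eq:2.3} of $\mathcal{N}_k$. Consequently, by Lemma~\ref{lem:2.1} there is exactly one critical point of $\phi$ in $(\mathbb{R}_{>0})^{k+1}$, namely $(t_1,\cdots,t_{k+1})$, and the substance of the lemma is then to show that this unique critical point is in fact a global maximum.

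To produce a maximizer I would extend $\phi$ continuously to $[0,\infty)^{k+1}$ and establish coercivity. For $c\in[0,\infty)^{k+1}$ with $c_{i_0}=\max_ic_i$, the self-interaction integral $\int_{B_{i_0}}\int_{B_{i_0}}|u_{i_0}(x)|^p|u_{i_0}(y)|^p/|x-y|\,dxdy$ is strictly positive since $u_{i_0}\neq 0$, and the negative contribution $-c_{i_0}^{2p}/(2p)$ times this integral eventually dominates the positive quadratic piece $\tfrac12\sum_ic_i^2\|u_i\|_i^2\leq\tfrac{k+1}{2}c_{i_0}^2\max_i\|u_i\|_i^2$ because $2p>2$. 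Hence $\phi(c)\to-\infty$ as $|c|\to\infty$, and continuity guarantees that $\phi$ attains a global maximum over $[0,\infty)^{k+1}$ at some $c^{\ast}$.

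The central step is to rule out boundary maximizers. If some component $c_{i_0}^{\ast}=0$, say $i_0=1$, then a direct expansion gives
\[
\phi(c_1,c_2^{\ast},\cdots,c_{k+1}^{\ast})-\phi(0,c_2^{\ast},\cdots,c_{k+1}^{\ast})=\tfrac12 c_1^2\|u_1\|_1^2+O(c_1^p)+O(c_1^{2p})
\]
as $c_1\to 0^{+}$. Because $p>\tfrac52>2$, both $c_1^p$ and $c_1^{2p}$ are $o(c_1^2)$, so the right-hand side is strictly positive for sufficiently small $c_1>0$, contradicting the maximality of $c^{\ast}$. Thus $c^{\ast}\in(\mathbb{R}_{>0})^{k+1}$ and is an interior critical point, which forces $c^{\ast}=(t_1,\cdots,t_{k+1})$; any other maximizer is excluded by the same argument, so this critical point is the unique global maximizer.

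The step I expect to be the main obstacle is the boundary exclusion: it relies crucially on the assumption $p>2$, since otherwise the negative cross term of order $c_1^p$ could dominate the positive quadratic piece near $c_1=0$ and allow a maximizer to sit on a face of the octant, destroying the scheme. The coercivity estimate and the identification of interior critical points with points of $\mathcal{N}_k$ are straightforward once one notes that each $u_i\neq 0$, so the entire argument reduces cleanly to this one low-order comparison together with Lemma~\ref{lem:2.1}.
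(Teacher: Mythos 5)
Your proposal is correct and follows essentially the same route as the paper: identify $(t_1,\cdots,t_{k+1})$ as the unique interior critical point of $\phi$ via Lemma \ref{lem:2.1}, use that $\phi\to-\infty$ at infinity (which holds since $2p>2$), and exclude boundary maximizers by noting that near a face the quadratic term $\tfrac12 c_1^2\|u_1\|_1^2$ dominates the $O(c_1^p)+O(c_1^{2p})$ nonlocal terms because $p>2$. Your write-up merely spells out the coercivity and the small-$c_1$ comparison a bit more explicitly than the paper does.
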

\begin{proof} By Lemma \ref{lem:2.1}, we know that $(t_{1},\cdots,t_{k+1})$ is the unique critical point of $\phi$ in $(\mathbb{R}_{>0})^{k+1}$.

Since $p\in(\frac{5}{2},5)$, it is observed that $\phi(c_{1},\cdots,c_{k+1})\rightarrow-\infty$ uniformly as $|(c_{1},\cdots,c_{k+1})|\rightarrow+\infty$, so it is sufficient to check that a maximum point cannot be achieved on the boundary of $(\mathbb{R}_{>0})^{k+1}$.
Choose $(c^0_{1},\cdots,c^0_{k+1})\in\partial(\mathbb{R}_{>0})^{k+1}$, without loss of generality, we may assume that $c_1^0=0$. Since
\[
\begin{aligned}
\phi(t,c^0_{2},\cdots,c^0_{k+1})&=E(tu_1,c^0_{2}u_2,\cdots,c^0_{k+1}u_{k+1})\\
&=\frac{t^2}{2}\|u_1\|_1^2-\frac{t^{2p}}{2p}\int_{B_1}\int_{B_1}\frac{|u_1(x)|^p|u_1(y)|^p}{|x-y|}\,dxdy\\
&\quad-\frac{t^{p}}{p}\sum_{i=2}^{k+1}\int_{B_1}\int_{B_i}\frac{|u_1(x)|^p|c_i^0u_i(y)|^p}{|x-y|}\,dxdy
+\frac{1}{2}\sum_{i=2}^{k+1}\|c_i^0u_i\|_i^2\\
&\quad-\frac{1}{2p}\sum_{i,j=2}^{k+1}\int_{B_i}\int_{B_j}\frac{|c_i^0u_i(x)|^p|c_i^0u_j(y)|^p}{|x-y|}\,dxdy
\end{aligned}
\]
is increasing with respect to $t$ if $t$ is small enough,  $(0,c^0_{2},\cdots,c^0_{k+1})$ is not a maximum point of $\phi$ in $(\mathbb{R}_{>0})^{k+1}$.
The assertion follows.
\end{proof}

\medskip
Finally, we have the following existence result for problem $(P_i)$.
\begin{Lemma}\label{lem:2.4}
For any $\frac{5}{2}<p<5$ and fixed $\mathbf{r}_k=(r_1,\cdots,r_k)\in\mathbf{\Gamma}_k$, there is a minimizer $(w_1,\cdots,w_{k+1})$ of $E\big|_{\mathcal{N}_k}$ such that each $(-1)^{i+1}w_i$ is positive on $B_i$ for $i=1,\cdots,k+1$. Moreover, $(w_1,\cdots,w_{k+1})$ satisfies $(P_i)$.
\end{Lemma}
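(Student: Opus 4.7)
The plan is to apply the direct method of the calculus of variations to $E_{\min}$, using the radial compactness of $\mathcal{H}_k$ to compensate for the loss of compactness on $\mathbb{R}^3$. First, summing the $(k+1)$ Nehari identities defining $\mathcal{N}_k$ and inserting them into \eqref{eq:2.1} produces
\[
E(u_1,\ldots,u_{k+1}) \;=\; \frac{p-1}{2p}\sum_{i=1}^{k+1}\|u_i\|_i^2 \quad \text{on } \mathcal{N}_k.
\]
Since $p>1$, any minimizing sequence $\{(u_1^n,\ldots,u_{k+1}^n)\}\subset \mathcal{N}_k$ is bounded in $\mathcal{H}_k$, so after extracting a subsequence $u_i^n\rightharpoonup w_i$ weakly in $H_i$. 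As each $H_i$ consists of radial $H^1_0$-functions supported on a ball, an annulus, or the exterior of a ball, Strauss-type radial compactness yields $u_i^n\to w_i$ strongly in $L^q(B_i)$ for every $q\in(2,6)$, in particular for $q=6p/5\in(3,6)$.

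Next I would show that $w_i\neq 0$ for every $i$. Writing $v^n:=\sum_j u_j^n$, the Nehari identity combined with the Hardy--Littlewood--Sobolev inequality and the Sobolev embedding $H^1(\mathbb{R}^3)\hookrightarrow L^{6p/5}(\mathbb{R}^3)$ gives
\[
\|u_i^n\|_i^2 = \int_{B_i}\int_{\mathbb{R}^3}\frac{|u_i^n(x)|^p|v^n(y)|^p}{|x-y|}\,dxdy \;\le\; C\,\|u_i^n\|_i^{p}\,\|v^n\|_{H^1(\mathbb{R}^3)}^{p}.
\]
Because $\|v^n\|_{H^1(\mathbb{R}^3)}^2=\sum_j\|u_j^n\|_j^2$ is uniformly bounded and $p>2$, this forces $\|u_i^n\|_i\geq c>0$ uniformly in $n$. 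Reinjecting this lower bound into the HLS estimate $\|u_i^n\|_i^2\leq C\|u_i^n\|_{6p/5}^p\,\|v^n\|_{6p/5}^p$ produces $\|u_i^n\|_{6p/5}\geq c'>0$, and the strong $L^{6p/5}$-convergence transfers this bound to $w_i$, giving $w_i\neq 0$.

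By Lemma \ref{lem:2.1} there is a unique $(t_1,\ldots,t_{k+1})\in(\mathbb{R}_{>0})^{k+1}$ with $(t_1w_1,\ldots,t_{k+1}w_{k+1})\in \mathcal{N}_k$. Applying Lemma \ref{lem:2.3} to each $(u_1^n,\ldots,u_{k+1}^n)$ gives $E(t_1u_1^n,\ldots,t_{k+1}u_{k+1}^n)\leq E(u_1^n,\ldots,u_{k+1}^n)$. Letting $n\to\infty$, weak lower semicontinuity of the $H_i$-norms combined with the strong $L^{6p/5}$-convergence of the nonlocal integrals (via HLS) yields
\[
E(t_1w_1,\ldots,t_{k+1}w_{k+1})\leq \liminf_{n\to\infty} E(t_1u_1^n,\ldots,t_{k+1}u_{k+1}^n)\leq E_{\min},
\]
while Nehari membership supplies the reverse inequality. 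Replacing $w_i$ by $t_iw_i$ produces a minimizer $(w_1,\ldots,w_{k+1})\in \mathcal{N}_k$, which by Lemma \ref{lem:2.2} is a critical point of $E$ with no zero component; hence each $w_i$ satisfies $(P_i)$.

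For the sign structure, $E$ and every $F_i$ depend on $u_i$ only through $|u_i|$, so the tuple $\bigl((-1)^{i+1}|w_i|\bigr)_{i=1}^{k+1}$ lies in $\mathcal{N}_k$, attains $E_{\min}$, and by Lemma \ref{lem:2.2} solves $(P_i)$ componentwise. Rewriting $(P_i)$ for the nonnegative function $|w_i|$ as $-\Delta|w_i|+\bigl[1-(|x|^{-1}*|v|^p)|w_i|^{p-2}\bigr]|w_i|=0$ with a locally bounded coefficient (elliptic regularity on $w_i$ together with HLS bounds on $|x|^{-1}*|v|^p$), the strong maximum principle gives $|w_i|>0$ in the interior of $B_i$, so $(-1)^{i+1}w_i>0$ there. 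The main obstacle in this plan is the nontriviality step: one must exploit the Nehari constraint twice, first through the Sobolev embedding to pin each $\|u_i^n\|_i$ away from zero, and then through HLS to turn that into a uniform $L^{6p/5}$-lower bound which survives passing to the weak limit.
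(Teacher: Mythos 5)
Your proof is correct and follows essentially the same route as the paper: boundedness of a minimizing sequence via the Nehari identity $E=\frac{p-1}{2p}\sum_i\|u_i\|_i^2$, radial compactness for the nonlocal terms, nontriviality of the weak limits from the constraint together with the Hardy--Littlewood--Sobolev inequality, projection back onto $\mathcal{N}_k$ through Lemmas \ref{lem:2.1} and \ref{lem:2.3}, and finally the replacement by $\bigl((-1)^{i+1}|w_i|\bigr)$ plus the strong maximum principle. The only (cosmetic) difference is that you obtain a minimizer directly by rescaling the weak limit, whereas the paper argues by contradiction to force strong convergence of the minimizing sequence itself.
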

\begin{proof}
By the Hardy-Littlewood-Sobolev inequality and Sobolev embedding theorem, we deduce for $(u_1,\cdots,u_{k+1})\in \mathcal{N}_k$ that
\[
\begin{split}
\|u_i\|_i^2&=\int_{\mathbb{R}^3}\int_{B_i}\frac{|u(x)|^p|u_i(y)|^p}{|x-y|}\,dxdy\\
&\leq C\|u_i\|_{\frac{6p}{5}}^p\|u\|_{\frac{6p}{5}}^p\\
&\leq C\|u\|^p\|u_i\|_i^p\\
&\leq C\|u_i\|_i^p.
\end{split}
\]
Hence, there exists a constant $\alpha_i>0$ such that $\|u_i\|_i\geq\alpha_i>0,\, i=1,\cdots,k+1$. If $(u_1,\cdots,u_{k+1})\in \mathcal{N}_k$, there holds
\begin{equation}\label{eq:2.12}
E(u_1,\cdots,u_{k+1})=(\frac{1}{2}-\frac{1}{2p})\sum_{i=1}^{k+1}\|u_i\|_i^2\geq\alpha>0
\end{equation}
for some $\alpha>0$. This implies that any minimizing sequence $\{(u_{1}^{n},\cdots,u_{k+1}^{n})\}$ of $E\big|_{\mathcal{N}_k}$ is bounded in $\mathcal{H}_k$.
We may assume that the minimizing sequence  $(u_{1}^{n},\cdots,u_{k+1}^{n})$ weakly converges to an element $(u_{1}^{0},\cdots,u_{k+1}^{0})$ in $\mathcal{H}_k$.

We claim that $u_{i}^{0}\neq0$ for each $i=1,\cdots,k+1$. Indeed, if $(u_{1}^{n},\cdots,u_{k+1}^{n})$ strongly converges to $(u_{1}^{0},\cdots,u_{k+1}^{0})$ in $\mathcal{H}_k$, we may show in the same way as the proof of \eqref{eq:2.8} that
$\|u_i^n\|_i^2\leq C\|u_i^n\|_i^p$ for each $i$, In other word, $\|u_i^n\|_i\geq\mu_i>0$, thereby  $\|u_i^0\|_i\geq\mu_i^0>0$ for $i=1,\cdots,k+1$.

Suppose now that $(u_{1}^{n},\cdots,u_{k+1}^{n})\nrightarrow(u_{1}^{0},\cdots,u_{k+1}^{0})$ strongly in $\mathcal{H}_k$ as $n\rightarrow\infty$. That is, $\|u_i^0\|_i<\liminf_{n\rightarrow\infty}\|u_i^n\|_i$ for at least one $i\in\{1,\cdots,k+1\}$. Again, we have $u_{i}^{0}\neq0$ for each $i=1,\cdots,k+1$.
Indeed, since $(u_{1}^{n},\cdots,u_{k+1}^{n})\in\mathcal{N}_k$,
\[
\|u_i^n\|_i^2=\int_{\mathbb{R}^3}\int_{B_i}\frac{|u^n(x)|^p|u_i^n(y)|^p}{|x-y|}\,dxdy
\]
and the inclusion  $H^1_r(\mathbb{R}^3)\hookrightarrow L^q(\mathbb{R}^3)$ is compact for $2<q<6$,
\begin{equation}\label{eq:2.12a}
\int_{\mathbb{R}^3}\int_{B_i}\frac{|u^n(x)|^p|u_i^n(y)|^p}{|x-y|}\,dxdy\to\int_{\mathbb{R}^3}\int_{B_i}\frac{|u^0(x)|^p|u_i^0(y)|^p}{|x-y|}\,dxdy
\end{equation}
as $n\to\infty$, we obtain
\[
\begin{split}
\|u_i^0\|_i^2&\leq\liminf_{n\rightarrow\infty}\|u_i^n\|_i^2\leq\lim_{n\rightarrow\infty}\int_{\mathbb{R}^3}\int_{B_i}\frac{|u^n(x)|^p|u_i^n(y)|^p}{|x-y|}dxdy\\
&=\int_{\mathbb{R}^3}\int_{B_i}\frac{|u^0(x)|^p|u_i^0(y)|^p}{|x-y|}\,dxdy\leq C\|u_i^0\|_i^p,\\
\end{split}
\]
implying that there exists a constant $\mu_0>0$ such that $\|u_i^0\|_i\geq\mu_0>0$.

Since each component of $(u_{1}^{0},\cdots,u_{k+1}^{0})$ is nonzero, by Lemma \ref{lem:2.1}, one can find $(t_{1}^{0},\cdots,t_{k+1}^{0})\in(\mathbb{R}_{>0})^{k+1}$ and $(t_{1}^{0},\cdots,t_{k+1}^{0})\neq(1,\cdots,1)$ such that $(t_{1}^{0}u_{1}^{0},\cdots,t_{k+1}^{0}u_{k+1}^{0})\in\mathcal{N}_k$. But, in this case, by \eqref{eq:2.12a}
and Lemma \ref{lem:2.3} we derive that
\[
\begin{aligned}
&\inf_{(u_1,\cdots,u_{k+1})\in \mathcal{N}_k}E(u_1,\cdots,u_{k+1})\\
&\leq E(t_1^0u_1^0,\cdots,t_{k+1}^{0}u_{k+1}^{0})\\
&<\liminf_{n\rightarrow\infty}\{\frac{1}{2}\sum_{i=1}^{k+1}(t_i^0)^2\|u_i^n\|_i^2
-\frac{1}{2p}\sum_{i=1}^{k+1}(t_i^0)^{2p}\int_{B_i}\int_{B_i}\frac{|u_i^n(x)|^p|u_i^n(y)|^p}{|x-y|}\,dxdy\\
&\quad-\frac{1}{2p}\sum_{j\neq i}^{k+1}\int_{B_i}\int_{B_j}\frac{(t_i^0)^p(t_j^0)^p|u_i^n(x)|^p|u_j^n(y)|^p}{|x-y|}\,dxdy\}\\
&\leq\liminf_{n\rightarrow\infty}E(u_1^n,\cdots,u_{k+1}^n)\\
&=\inf_{(u_1,\cdots,u_{k+1})\in \mathcal{N}_k}E(u_1,\cdots,u_{k+1}),
\end{aligned}
\]
 which is a contradiction. Therefore, $(u_1^n,\cdots,u_{k+1}^n)$ strongly converges to $(u_1^0,\cdots,u_{k+1}^0)$ in $\mathcal{H}_k$ and $(u_1^0,\cdots,u_{k+1}^0)\in\mathcal{N}_k$ is a minimizer of $E\big|_{\mathcal{N}_k}$.

Furthermore, we may check that
\[
(w_1,\cdots,w_{k+1}):=(|u_1^0|,-|u_2^0|,\cdots,(-1)^k|u_{k+1}^0|)
\]
is also in $\mathcal{N}_k$ and is a minimizer of $E\big|_{\mathcal{N}_k}$. Hence, it is a critical point of $E\big|_{\mathcal{N}_k}$.  By Lemma \ref{lem:2.2}, it is also a critical point of $E$
and satisfies $(P_i)$. The strong maximum principle yields that each $(-1)^{i+1}w_i$ is positive in $B_i$. The assertion follows.

\end{proof}

\bigskip

\section {Existence of sign-changing radial solutions}

\bigskip

 It is known that for any ${\bf r}_k=(r_1,\cdot \cdot \cdot,r_k) \in {\bf \Gamma}_k$, there is a solution $w^{\mathbf{r}_k}=(w_1^{\mathbf{r}_k},\cdots,w_{k+1}^{\mathbf{r}_k})$ of $(P_i)$ which consists of sign changing components. We will find a ${\bf \bar r}_k=(\bar r_1,\cdot \cdot \cdot,\bar r_k) \in {\bf \Gamma}_k$ such that $w^{{\bf \bar r}_k}=(w^{{\bf \bar r}_k}_1,\cdot \cdot \cdot,w^{{\bf \bar r}_k}_{k+1})$ is a solution of $(P_i)$ which is
characterized as a least energy solution among all elements in ${\bf \Gamma}_k$ with nonzero components. Using this solution as a building block, we will construct a radial solution of $(P)$ that changes sign exactly $k$ times.  Denote by $B_i^{{\bf r}_k}$ the nodal domain and by $E^{{\bf r}_k}$ the functional related to ${\bf r}_k$. Note that $w^{{\bf  r}_k}_i$ is $\mathcal{C}^2(B_i^{{\bf r}_k})$ for each $i$ by standard elliptic regularity results. Hence, it is enough to match the first derivative with respect to the radial variable, of adjacent components $w^{{\bf r}_k}_i$ and $w^{{\bf r}_k}_{i+1}$ at the point $r_i$ to ensure the existence of a solution of equation $(P)$ with $k$ times sign changing.

In order to find a least energy radial solution of $(P_i)$ among elements in ${\bf \Gamma}_k$ with nonzero components, we need to estimate the energy of the solution $(w^{{\bf r}_k}_1,\cdot \cdot \cdot,w^{{\bf r}_k}_{k+1})$
of $(P_i)$. To this end, we first define the function  $\psi:\mathbf{\Gamma}_k\rightarrow \mathbb{R}$ by
\begin{equation}\label{eq:3.1}
\begin{split}
\psi(\mathbf{r}_k)&=\psi(r_1,\cdots,r_k)=E^{\mathbf{r}_k}(w_1^{\mathbf{r}_k},\cdots,w_{k+1}^{\mathbf{r}_k})\\
&=\inf_{(u_1^{\mathbf{r}_k},\cdots,u_{k+1}^{\mathbf{r}_k})\in \mathcal{N}_k^{\mathbf{r}_k}}E^{\mathbf{r}_k}(u_1^{\mathbf{r}_k},\cdots,u_{k+1}^{\mathbf{r}_k})\\
\end{split}
\end{equation}
\begin{Lemma}\label{lem:3.1} Suppose $\frac{5}{2}<p<5$. For any positive integer $k$, let $\mathbf{r}_k=(r_1,\cdots,r_k)\in\mathbf{\Gamma}_k$. Then,

$(i)$ if $r_i-r_{i-1}\rightarrow0$ for some $i\in\{1,\cdots,k\}$, then $\psi(\mathbf{r}_k)\rightarrow+\infty$;

$(ii)$ if $r_k\rightarrow\infty$, then $\psi(\mathbf{r}_k)\rightarrow+\infty$;

$(iii)$ $\psi$ is continuous in $\mathbf{\Gamma}_k$.

In particular, there is a ${{\bf\bar r}_k}=(\bar r_1,\cdot \cdot \cdot,\bar r_k)\in {\bf\Gamma}_k$  such that
\[
\psi({\bf\bar r}_k) = \inf_{{\bf r}_k\in {\bf\Gamma}_k}\psi({\bf r}_k).
\]
\end{Lemma}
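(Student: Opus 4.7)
The plan is to bound $\psi$ from below via the Nehari identity
\[
\|u_i\|_i^2=\int_{B_i}\int_{\mathbb{R}^3}\frac{|u_i(x)|^p|u(y)|^p}{|x-y|}\,dxdy,\qquad u:=\textstyle\sum_j u_j,
\]
combined with the Hardy--Littlewood--Sobolev inequality and domain-sensitive embedding estimates, so as to show $\psi$ blows up whenever a nodal region degenerates, and to establish continuity via approximation by radial diffeomorphisms close to the identity. Once (i)--(iii) are proved, together with the lower bound $\psi\geq\alpha>0$ implicit in Lemma \ref{lem:2.4}, any minimizing sequence for $\inf_{\mathbf{\Gamma}_k}\psi$ must stay in a compact subset of $\mathbf{\Gamma}_k$ by (i) and (ii), and (iii) delivers the minimizer $\bar{\mathbf{r}}_k$.

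For (i), set $h=r_i-r_{i-1}$. The Poincar\'e inequality for $H^1_0$-functions on the thin annulus $B_i$ gives $\|u_i\|_{L^2(B_i)}\leq Ch\|\nabla u_i\|_{L^2(B_i)}$; interpolating between $L^2(B_i)$ and $L^6(B_i)$ together with the universal 3D Sobolev embedding yields $\|u_i\|_{L^{6p/5}(B_i)}\leq Ch^{(5-p)/(2p)}\|u_i\|_i$. Applying Hardy--Littlewood--Sobolev to the Nehari identity and using $\|u\|_{L^{6p/5}(\mathbb{R}^3)}\leq C\|u\|_{H^1(\mathbb{R}^3)}$ (valid since $6p/5\in(3,6)$ for $p\in(\frac{5}{2},5)$), one obtains $\|u_i\|_i^2\leq Ch^{(5-p)/2}\|u_i\|_i^p\|u\|^p$. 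Combined with $\|u_i\|_i\leq\|u\|$ and the Nehari-level identity $E=\frac{p-1}{2p}\|u\|^2$, this produces the uniform lower bound $\psi(\mathbf{r}_k)\geq Ch^{-(5-p)/(2(p-1))}\to\infty$ as $h\to 0$, since the exponent is strictly positive for $p<5$. For (ii), I would invoke the Strauss radial bound $|u_{k+1}(x)|\leq C|x|^{-1}\|u_{k+1}\|_{k+1}$, which is available because $u_{k+1}\in H_{k+1}\subset H^1_r(\mathbb{R}^3)$. Splitting $|u_{k+1}|^{6p/5}=|u_{k+1}|^2\cdot|u_{k+1}|^{6p/5-2}$ and integrating on $B_{k+1}=\{|x|>r_k\}$ (using that $6p/5>3$ for $p>\tfrac{5}{2}$) yields
\[
\|u_{k+1}\|_{L^{6p/5}(B_{k+1})}\leq Cr_k^{-(3p-5)/(3p)}\|u_{k+1}\|_{k+1}.
\]
Feeding this decay factor into the same manipulation as in (i) produces $\psi(\mathbf{r}_k)\geq Cr_k^{(3p-5)/(3(p-1))}\to\infty$ as $r_k\to\infty$, the exponent being positive on $(\frac{5}{2},5)$.

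For (iii), given $\mathbf{r}_k^{(n)}\to\mathbf{r}_k$ in $\mathbf{\Gamma}_k$, I would construct a sequence of radial diffeomorphisms $\Phi_n:\mathbb{R}^3\to\mathbb{R}^3$, piecewise-linear in the radial variable, sending each $B_i^{\mathbf{r}_k}$ onto $B_i^{\mathbf{r}_k^{(n)}}$, with $\Phi_n\to\mathrm{id}$ in $C^1$. For the upper bound, take a minimizer $(w_i)\in\mathcal{N}_k^{\mathbf{r}_k}$, set $w_i^{(n)}:=w_i\circ\Phi_n^{-1}\in H_i^{\mathbf{r}_k^{(n)}}$, and invoke Lemma \ref{lem:2.1} to find the unique $(t_1^{(n)},\ldots,t_{k+1}^{(n)})>0$ with $(t_i^{(n)}w_i^{(n)})\in\mathcal{N}_k^{\mathbf{r}_k^{(n)}}$; the implicit function theorem, applied with the matrix nonsingularity established in Lemma \ref{lem:2.1}, forces $t_i^{(n)}\to 1$, whence $\limsup_n\psi(\mathbf{r}_k^{(n)})\leq\psi(\mathbf{r}_k)$. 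For the matching lower bound, pull back minimizers at $\mathbf{r}_k^{(n)}$ via $\Phi_n$; by the upper-bound step these are uniformly bounded in $\mathcal{H}_k$, and the compact embedding $H^1_r(\mathbb{R}^3)\hookrightarrow L^q(\mathbb{R}^3)$ for $2<q<6$ allows passage to the weak limit in each nonlocal term as in \eqref{eq:2.12a}; rescaling the limit tuple via Lemma \ref{lem:2.1} then produces $\psi(\mathbf{r}_k)\leq\liminf_n\psi(\mathbf{r}_k^{(n)})$. I expect the main technical obstacle to be verifying that every component of the weak limit is nontrivial; this should follow from the uniform positive lower bound $\|u_i\|_i\geq\alpha_i>0$ on $\mathcal{N}_k^{\mathbf{r}_k^{(n)}}$ already used in Lemma \ref{lem:2.4}, transported through $\Phi_n$ without loss because $\Phi_n\to\mathrm{id}$ in $C^1$.
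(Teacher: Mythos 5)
Your proposal is correct and follows essentially the same strategy as the paper: for (i) and (ii) you combine the Nehari identity with Hardy--Littlewood--Sobolev and a domain-degenerating embedding estimate (your Poincar\'e--interpolation and weighted Strauss splitting replace the paper's direct H\"older and Strauss computations, yielding the same blow-up of $\|u_i\|_i$ and hence of $\psi$ via $E=\frac{p-1}{2p}\sum_i\|u_i\|_i^2$), and for (iii) you transplant minimizers by piecewise-linear radial maps and project onto the Nehari sets with scaling factors tending to $1$, exactly as in the paper. The only deviation is that in the $\liminf$ half of (iii) you pass to weak limits and reuse the Lemma \ref{lem:2.4}/Lemma \ref{lem:2.3} argument instead of the paper's direct rescaling of the minimizers at $\mathbf{r}_k^n$ back to the limit configuration; this works but is a slightly longer route to the same inequality.
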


\begin{proof}
$(i)$ Suppose that $r_{i_0}-r_{i_0-1}\rightarrow0$ for some $i_0\in\{1,\cdots,k\}$, by the Hardy-Littlewood-Sobolev inequality, H\"{o}lder inequality and Sobolev inequality, we have
\begin{equation}\label{eq:3.2}
\begin{aligned}
\|w_{i_0}^{\mathbf{r}_k}\|_{i_0}^2&=\int_{\mathbb{R}^3}\int_{B_{i_0}^{\mathbf{r}_k}}\frac{|w^{\mathbf{r}_k}(x)|^p|w_{i_0}^{\mathbf{r}_k}(y)|^p}{|x-y|}\,dxdy\\
&\leq C\|w^{\mathbf{r}_k}\|_{\frac{6p}{5}}^p\|w_{i_0}^{\mathbf{r}_k}\|_{\frac{6p}{5}}^p\\
&\leq C\|w_{i_0}^{\mathbf{r}_k}\|_{i_0}^p|B_{i_0}^{\mathbf{r}_k}|^{\frac{5-p}{6}},
\end{aligned}
\end{equation}
which implies $\|w_{i_0}^{\mathbf{r}_k}\|_{i_0}\rightarrow+\infty$ as $r_{i_0}-r_{i_0-1}\rightarrow0$ since $\frac{5}{2}<p<5$. Thus, we derive from \eqref{eq:2.12} that
\[
\psi(\mathbf{r}_k)=E^{\mathbf{r}_k}(w_1^{\mathbf{r}_k},\cdots,w_{k+1}^{\mathbf{r}_k})\geq(\frac{1}{2}-\frac{1}{2p})\|w_{i_0}^{\mathbf{r}_k}\|_{i_0}^2\rightarrow\infty,
\]
Therefore, the first item holds.

$(ii)$ By the Strauss inequality \cite{S}, that is, for $u \in H^1_r(\mathbb{R}^3)$, there exists $C>0$, such that
\[
|u(x)|\leq C \frac{\|u\|}{|x|},\,\,a.e \quad{\rm in}\quad\mathbb{R}^3,
\]
we deduce as \eqref{eq:3.2} that
\[
\begin{aligned}
\|w_{k+1}^{\mathbf{r}_k}\|_{k+1}^2&=\int_{\mathbb{R}^3}\int_{B_{k+1}^{\mathbf{r}_k}}\frac{|w^{\mathbf{r}_k}(x)|^p|w_{k+1}^{\mathbf{r}_k}(y)|^p}{|x-y|}dxdy\\
                  &\leq C\big(\int_{B_{k+1}^{\mathbf{r}_k}}|w_{k+1}^{\mathbf{r}_k}(x)|^{\frac{6p}{5}}dx\big)^{\frac{5}{6}}\\
                   &\leq C\|w_{k+1}^{\mathbf{r}_k}\|_{k+1}^p\frac{5}{6p-15} r_k^{\frac{15-6p}{5}},
\end{aligned}
\]
that is,
\[
r_k^{\frac{6p-15}{5}}\leq C\frac{5}{6p-15}\|w_{k+1}^{\mathbf{r}_k}\|_{k+1}^{p-2}.
\]
Since $\frac{5}{2}<p<5$, we deduce that $\|w_{k+1}^{\mathbf{r}_k}\|_{k+1}\rightarrow+\infty$ as $r_k\rightarrow\infty$. Then, by \eqref{eq:2.12}, we obtain
\[
\psi(\mathbf{r}_k)=E^{\mathbf{r}_k}(w_1^{\mathbf{r}_k},\cdots,w_{k+1}^{\mathbf{r}_k})\geq(\frac{1}{2}-\frac{1}{2p})\|w_{k+1}^{\mathbf{r}_k}\|_{k+1}^2\rightarrow\infty,
\]
and the conclusion in $(ii)$ holds.

$(iii)$  Take a sequence $\{\mathbf{r}_k^n\}_{n=1}^\infty=\{(r_1^n,\cdots,r_k^n)\}\subseteq\mathbf{\Gamma}_k$ such that
\[
{\bf r}^n_k\rightarrow \tilde{\mathbf{r}}_k=(\tilde{r}_1,\cdots,\tilde{r}_k)\in {\bf \Gamma}_k.
\]
The assertion follows by showing
\begin{equation}\label{eq:3.3}
\psi(\tilde{\mathbf{r}}_k)\geq\limsup_{n\rightarrow\infty}\psi(\mathbf{r}_k^n),\,\,\,\psi(\tilde{\mathbf{r}}_k)\leq\limsup_{n\rightarrow\infty}\psi(\mathbf{r}_k^n).
\end{equation}

First, we prove $\psi(\tilde{\mathbf{r}}_k)\geq\limsup_{n\rightarrow\infty}\psi(\mathbf{r}_k^n)$. Defined $v_i^{\mathbf{r}_k^n}:[r_{i-1}^n,r_i^n]\rightarrow \mathbb{R}$ such that
\[
v_i^{\mathbf{r}_k^n}(t)=t_i^nw_i^{\tilde{\mathbf{r}}_k}\bigg(\frac{\tilde{r}_i-\tilde{r}_{i-1}}{r_i^n-r_{i-1}^n}(t-r_{i-1}^n)+\tilde{r}_{i-1}\bigg)
\]
for $i=1,\cdots,k$ and
\[
v_{k+1}^{\mathbf{r}_k^n}(t)=t_{k+1}^nw_{k+1}^{\tilde{\mathbf{r}}_k}\bigg(\frac{\tilde{r}_k}{r_k^n}t\bigg),
\]
where $r_0^n=0,\ r_{k+1}^n=\infty$ and each $(t_1^n,\cdots,t_{k+1}^n)$ is a unique $(k+1)$-tuple of positive real numbers such that $(v_1^{\mathbf{r}_k^n},\cdots,v_{k+1}^{\mathbf{r}_k^n})\in \mathcal{N}_k^{\mathbf{r}_k^n}$. By the definition of $(w_1^{\mathbf{r}^n_k},\cdots,w_{k+1}^{\mathbf{r}^n_k})$, we have
\[
E^{\mathbf{r}^n_k}(v_1^{\mathbf{r}_k^n},\cdots,v_{k+1}^{\mathbf{r}_k^n})\geq E^{\mathbf{r}^n_k}(w_1^{\mathbf{r}^n_k},\cdots,w_{k+1}^{\mathbf{r}^n_k})=\psi(\mathbf{r}^n_k)
\]
Therefore, for $n$ large enough, we have
\[
\|v_i^{\mathbf{r}_k^n}\|_{B_i^{\mathbf{r}_k^n}}^2=(t_i^n)^2\|w_i^{\tilde{\mathbf{r}}_k}\|_{B_i^{\tilde{\mathbf{r}}_k}}^2+o(1)
\]
and
\[
\begin{aligned}
&\int_{B_i^{\mathbf{r}_k^n}}\int_{B_i^{\mathbf{r}_k^n}}\frac{|v_i^{\mathbf{r}_k^n}(x)|^p|v_j^{\mathbf{r}_k^n}(y)|^p}{|x-y|}\,dxdy\\
&=(t_i^n)^p(t_j^n)^p\int_{B_i^{\tilde{\mathbf{r}}_k}}\int_{B_j^{\tilde{\mathbf{r}}_k}}\frac{|w_i^{\tilde{\mathbf{r}}_k}(x)|^p|w_j^{\tilde{\mathbf{r}}_k}(y)|^p}{|x-y|}\,dxdy+o(1).
\end{aligned}
\]
Since $(v_1^{\mathbf{r}_k^n},\cdots,v_{k+1}^{\mathbf{r}_k^n})\in \mathcal{N}_k^{\mathbf{r}_k^n}$, we have
\[
\begin{aligned}
&\|v_i^{\mathbf{r}_k^n}\|_{B_i^{\mathbf{r}_k^n}}^2
-\int_{B_i^{\mathbf{r}_k^n}}\int_{B_i^{\mathbf{r}_k^n}}\frac{|v_i^{\mathbf{r}_k^n}(x)|^p|v_i^{\mathbf{r}_k^n}(y)|^p}{|x-y|}\,dxdy\\
&-\sum_{j\neq i}^{k+1}\int_{B_i^{\mathbf{r}_k^n}}\int_{B_j^{\mathbf{r}_k^n}}\frac{|v_i^{\mathbf{r}_k^n}(x)|^p|v_j^{\mathbf{r}_k^n}(y)|^p}{|x-y|}\,dxdy=0
\end{aligned}
\]
for $i=1,\cdots,k+1$, which implies
\begin{equation}\label{eq:3.4}
\begin{aligned}
&(t_i^n)^2\|w_i^{\tilde{\mathbf{r}}_k}\|_{B_i^{\tilde{\mathbf{r}}_k}}^2
-(t_i^n)^{2p}\int_{B_i^{\tilde{\mathbf{r}}_k}}\int_{B_i^{\tilde{\mathbf{r}}_k}}\frac{|w_i^{\tilde{\mathbf{r}}_k}(x)|^p|w_i^{\tilde{\mathbf{r}}_k}(y)|^p}{|x-y|}\,dxdy\\
&-\sum_{j\neq i}^{k+1}(t_i^n)^p(t_j^n)^p\int_{B_i^{\tilde{\mathbf{r}}_k}}\int_{B_j^{\tilde{\mathbf{r}}_k}}\frac{|w_i^{\tilde{\mathbf{r}}_k}(x)|^p
|w_j^{\tilde{\mathbf{r}}_k}(y)|^p}{|x-y|}\,dxdy=o(1).
\end{aligned}
\end{equation}
Hence, the fact $(w_1^{\tilde{\mathbf{r}}_k},\cdots,w_{k+1}^{\tilde{\mathbf{r}}_k})\in \mathcal{N}_k^{\tilde{\mathbf{r}}_k}$, namely,
\begin{equation}\label{eq:3.5}
\begin{aligned}
&\|w_i^{\tilde{\mathbf{r}}_k}\|_{B_i^{\tilde{\mathbf{r}}_k}}^2
-\int_{B_i^{\tilde{\mathbf{r}}_k}}\int_{B_i^{\tilde{\mathbf{r}}_k}}\frac{|w_i^{\tilde{\mathbf{r}}_k}(x)|^p|w_i^{\tilde{\mathbf{r}}_k}(y)|^p}{|x-y|}\,dxdy\\
&-\sum_{j\neq i}^{k+1}\int_{B_i^{\tilde{\mathbf{r}}_k}}\int_{B_j^{\tilde{\mathbf{r}}_k}}\frac{|w_i^{\tilde{\mathbf{r}}_k}(x)|^p|w_j^{\tilde{\mathbf{r}}_k}(y)|^p}{|x-y|}\,dxdy=0
\end{aligned}
\end{equation}
and \eqref{eq:3.4} yield $\lim_{n\rightarrow\infty}t_i^n=1$ for all $i$. Consequently,
\[
\begin{aligned}
\psi(\tilde{\mathbf{r}}_k)&=E^{\tilde{\mathbf{r}}_k}(w_1^{\tilde{\mathbf{r}}_k},\cdots,w_{k+1}^{\tilde{\mathbf{r}}_k})
=\limsup_{n\rightarrow\infty}E^{\mathbf{r}^n_k}(v_1^{\mathbf{r}_k^n},\cdots,v_{k+1}^{\mathbf{r}_k^n})\\
&\geq\limsup_{n\rightarrow\infty}E^{\mathbf{r}^n_k}(w_1^{\mathbf{r}^n_k},\cdots,w_{k+1}^{\mathbf{r}^n_k})=\limsup_{n\rightarrow\infty}\psi(\mathbf{r}^n_k).
\end{aligned}
\]
This also implies that
\begin{equation}\label{eq:3.6}
\limsup_{n\rightarrow\infty}\|w_i^{\mathbf{r}_k^n}\|_{B_i^{\mathbf{r}_k^n}}^2<\infty,
\limsup_{n\rightarrow\infty}\int_{B_i^{\mathbf{r}_k^n}}\int_{B_i^{\mathbf{r}_k^n}}\frac{|w_i^{\mathbf{r}_k^n}(x)|^p|w_j^{\mathbf{r}_k^n}(y)|^p}{|x-y|}dxdy<\infty.
\end{equation}

Next, we turn to prove $\psi(\tilde{\mathbf{r}}_k)\leq\limsup_{n\rightarrow\infty}\psi(\mathbf{r}_k^n)$.

In the same way, we define functions $\bar{v}_i^{\mathbf{r}_k^n}:[\tilde{r}_{i-1},\tilde{r}_i]\rightarrow \mathbb{R}$ such that
\[
\bar{v}_i^{\mathbf{r}_k^n}(t)=s_i^nw_i^{\mathbf{r}_k^n}\bigg(\frac{r_i^n-r_{i-1}^n}{\tilde{r}_i-\tilde{r}_{i-1}}(t-\tilde{r}_{i-1})+r_{i-1}^n\bigg)
\]
for $i=1,\cdots,k$ and
\[
\bar{v}_{k+1}^{\mathbf{r}_k^n}(t)=s_{k+1}^nw_{k+1}^{\mathbf{r}_k^n}\bigg(\frac{r_k^n}{\tilde{r}_k}t\bigg),
\]
where $r_0^n=0,\ r_{k+1}^n=\infty$ and each $(s_1^n,\cdots,s_{k+1}^n)$ is a unique $(k+1)$-tuple of positive real numbers such that $(\bar{v}_1^{\mathbf{r}_k^n},\cdots,\bar{v}_{k+1}^{\mathbf{r}_k^n})\in \mathcal{N}_k^{\tilde{\mathbf{r}}_k}$. Then, by the definition of $(w_1^{\tilde{\mathbf{r}}_k},\cdots,w_{k+1}^{\tilde{\mathbf{r}}_k})$, we have
\[
E^{\mathbf{r}^n_k}(\bar{v}_1^{\mathbf{r}_k^n},\cdots,\bar{v}_{k+1}^{\mathbf{r}_k^n})\geq E^{\tilde{\mathbf{r}}_k}(w_1^{\tilde{\mathbf{r}}_k},\cdots,w_{k+1}^{\tilde{\mathbf{r}}_k})=\psi({\tilde{\mathbf{r}}_k}).
\]
Similarly, we may derive that
\begin{equation}\label{eq:3.7}
\begin{aligned}
&(s_i^n)^2\|w_i^{\mathbf{r}_k^n}\|_{B_i^{\mathbf{r}_k^n}}^2-(s_i^n)^{2p}\int_{B_i^{\mathbf{r}_k^n}}\int_{B_i^{\mathbf{r}_k^n}}
\frac{|w_i^{\mathbf{r}_k^n}(x)|^p|w_i^{\mathbf{r}_k^n}(y)|^p}{|x-y|}\,dxdy\\
&-\sum_{j\neq i}^{k+1}(s_i^n)^p(s_j^n)^p\int_{B_i^{\mathbf{r}_k^n}}\int_{B_j^{\mathbf{r}_k^n}}\frac{|w_i^{\mathbf{r}_k^n}(x)|^p|w_j^{\mathbf{r}_k^n}(y)|^p}{|x-y|}\,dxdy=o(1)
\end{aligned}
\end{equation}
and
\begin{equation}\label{eq:3.8}
\begin{aligned}
&\|w_i^{\mathbf{r}_k^n}\|_{B_i^{\mathbf{r}_k^n}}^2
-\int_{B_i^{\mathbf{r}_k^n}}\int_{B_i^{\mathbf{r}_k^n}}\frac{|w_i^{\mathbf{r}_k^n}(x)|^p|w_i^{\mathbf{r}_k^n}(y)|^p}{|x-y|}\,dxdy\\
&-\sum_{j\neq i}^{k+1}\int_{B_i^{\mathbf{r}_k^n}}\int_{B_j^{\mathbf{r}_k^n}}\frac{|w_i^{\mathbf{r}_k^n}(x)|^p|w_j^{\mathbf{r}_k^n}(y)|^p}{|x-y|}\,dxdy=0
\end{aligned}
\end{equation}
for each $i=1,\cdots,k+1$. We deduce from \eqref{eq:3.7} and  \eqref{eq:3.8}  that $\lim_{n\rightarrow\infty}s_i^n=1$ for all $i$. Therefore,
\[
\begin{aligned}
\psi({\tilde{\mathbf{r}}_k})&=E^{\tilde{\mathbf{r}}_k}(w_1^{\tilde{\mathbf{r}}_k},\cdots,w_{k+1}^{\tilde{\mathbf{r}}_k})
\leq\liminf_{n\rightarrow\infty}E^{\mathbf{r}^n_k}(\bar{v}_1^{\mathbf{r}_k^n},\cdots,\bar{v}_{k+1}^{\mathbf{r}_k^n})\\
&=\liminf_{n\rightarrow\infty}E^{\mathbf{r}^n_k}(w_1^{\mathbf{r}^n_k},\cdots,w_{k+1}^{\mathbf{r}^n_k})=\liminf_{n\rightarrow\infty}\psi(\mathbf{r}^n_k).
\end{aligned}
\]
This completes the proof of $(iii)$.

As a result, we infer from $(i)-(iii)$ that there is a minimum point $\bar{\mathbf{r}}_k=(\bar{r}_1,\cdots,\bar{r}_k)\in\mathbf{\Gamma}_k$ of $\psi$.

\end{proof}

Finally, we show that the solution $(w_1^{\bar{\mathbf{r}}_k},\cdots ,w_{k+1}^{\bar{\mathbf{r}}_k})$ of $(P_i)$, corresponding to the point $\bar{\mathbf{r}}_k=(\bar{r}_1,\cdots,\bar{r}_k)\in\mathbf{\Gamma}_k$ which we found in the previous lemma, is the exact element which gives the solution of $(P)$ with desired sign changing property.

\medskip
{\bf Proof of Theorem \ref{thm:1.1}} Suppose on the contrary that $\sum_{i=1}^{k+1}w_i^{\bar{\mathbf{r}}_k}$ is not a solution of $(P)$, there would exist $l\in\{1,\cdots,k\}$ such that
\begin{equation}\label{eq:3.9}
w_-=\lim_{t\rightarrow \bar{r}_l^-}\frac{dw_l^{\bar{\mathbf{r}}_k}(t)}{dt}\neq\lim_{t\rightarrow \bar{r}_l^+}\frac{dw_{l+1}^{\bar{\mathbf{r}}_k}(t)}{dt}=w_+.
\end{equation}

Denote $w_l(t) = w_l^{\bar{\mathbf{r}}_k}(t)$ and $w_{l+1}(t) = w_{l+1}^{\bar{\mathbf{r}}_k}(t)$. Fix a small positive number $\delta$ and set
\begin{equation*}
\bar{y}(t)=\left\{
\begin{aligned}
&w_l(t),&{\rm if}\ t\in(\bar{r}_{l-1},\bar{r}_l-\delta), \\
&w_l(\bar{r}_l-\delta)
+\frac{w_{l+1}(\bar{r}_l+\delta)-w_l(\bar{r}_l-\delta)}{2\delta}(t-\bar{r}_l+\delta),&{\rm if}\ t\in(\bar{r}_l-\delta,\bar{r}_l+\delta), \\
&w_{l+1}(t),&{\rm if}\ t\in(\bar{r}_l+\delta,\bar{r}_{l+1}). \\
\end{aligned} \right.
\end{equation*}
There exists a unique ${\bar s}_l\in(\bar{r}_{l-1}-\delta,\bar r_{l+1}+\delta)$ such that
\[
{\bar y}(t)|_{t={\bar s}_l}=0
\]
since $\bar{y}(\bar{r}_{l-1}-\delta)\bar{y}(\bar{r}_l+\delta)<0$. Define a $(k+1)$-tuple of functions $(\bar{z}_1,\cdots,\bar{z}_{k+1})$ as follows.
\[
\left\{\begin{array}{c@{\quad }l}
\bar{z}_l(t)=\bar{y}(t),&{\rm for} \ t\in(\bar{r}_{l-1},\bar{s}_l), \\
\bar{z}_{l+1}(t)=\bar{y}(t),&{\rm for} \ t\in(\bar{s}_l,\bar{r}_{l+1}), \\
\bar{z}_i(t)=w_i^{\bar{\mathbf{r}}_k}(t),&{\rm for} \ t\in(\bar{r}_{i-1},\bar{r}_{i})\ {\rm if}\ i\neq l,l+1. \\
\end{array} \right.\
\]
By Lemma \ref{lem:2.1},  there is a unique $(k+1)$-tuple $(\hat{t}_1,\cdots,\hat{t}_{k+1})\in(\mathbb{R}_{>0})^{k+1}$ such that
\[
(z_1^{\bar{\mathbf{s}}},\cdots,z_{k+1}^{\bar{\mathbf{s}}}):=(\hat{t}_1\bar{z}_1,\cdots,\hat{t}_{k+1}\bar{z}_{k+1})\in \mathcal{N}_k^{\bar{\mathbf{s}}}
\]
with $\bar{\mathbf{s}}=(\bar{r}_1,\cdots,\bar{r}_{l-1},\bar{s},\bar{r}_{l+1},\cdots,\bar{r}_k)$. On
the other hand, we can verify that
\begin{equation}\label{eq:3.10}
(\hat{t}_1,\cdots,\hat{t}_{k+1})\rightarrow(1,\cdots,1)
\end{equation}
as $\delta\rightarrow0$. Let $W(t):=\sum_{i=1}^{k+1}w_i^{\bar{\mathbf{r}}_k}(t)\in H^1_r(\mathbb{R}^3)$ and $Z(t):=\sum_{i=1}^{k+1}z_i^{\bar{\mathbf{s}}}(t)\in H^1_r(\mathbb{R}^3)$.
Then
\begin{equation}\label{eq:3.11}
E(W)=E^{\bar{\mathbf{r}}_k}(w_1^{\bar{\mathbf{r}}_k},\cdots,w_{k+1}^{\bar{\mathbf{r}}_k})\leq E^{\bar{\mathbf{s}}}(z_1^{\bar{\mathbf{s}}},\cdots,z_{k+1}^{\bar{\mathbf{s}}})=E(Z).
\end{equation}

On the other hand, for any $f\in H^1_r(\mathbb{R}^3)$, the solution $\varphi$ of $-\Delta\varphi=f$ is radial and it can be expressed as
\[
\varphi(t)=\frac{1}{t}\int_{0}^{\infty}f(s)s\min\{s,t\}\,ds
\]
for $t>0$. Therefore, $W$ satisfies
\begin{equation}\label{eq:3.12}
\int_0^\infty t^2(W'^{2}+W^2)dt=\int_0^\infty\int_0^\infty|W(s)|^p|W(t)|^p st\min\{s,t\}\,dsdt
\end{equation}
and
\begin{equation}\label{eq:3.13}
\begin{split}
E(W)&=\frac{1}{2}\int_0^\infty (W'^{2}+W^2)t^2dt\\
&\quad-\frac{1}{2p}\int_0^\infty\int_0^\infty|W(s)|^p|W(t)|^pst \min\{s,t\}\,dsdt\\
&=\big(\frac{1}{2}-\frac{1}{2p}\big)\int_0^\infty\int_0^\infty|W(s)|^p|W(t)|^pst\min\{s,t\}\,dsdt.
\end{split}
\end{equation}
We deduce from
\[
w_-= \lim_{\delta\to 0}\frac{W(\bar r_l-\delta)-W(\bar r_l)}{-\delta}
\]
that
\begin{equation}\label{eq:3.14}
W(\bar r_l-\delta)=-\delta w_-+o(\delta).
\end{equation}
Since $W$ satisfies
\[
-\big(t^2W'\big)'+t^2W=\int_0^\infty |W(s)|^pst\min\{s,t\}\,ds|W|^{p-2}W(t)
\]
for $\bar{r}_l-\delta\leq t\leq \bar{r}_l$, and $W(\bar{r}_l)= 0$,   thereby $\big(t^2W'\big)'(\bar{r}_l)=0$, we obtain
\begin{equation}\label{eq:3.15}
 (\bar{r}_l-\delta)^2W'(\bar{r}_l-\delta)=\bar{r}_l^2w_-+o(\delta).
\end{equation}
We write
\[
\begin{split}
E(Z)&=\frac{1}{2}\int_0^\infty (Z'^{2}+Z^2)t^2dt-\frac{1}{2p}\int_0^\infty\int_0^\infty|Z(s)|^p|Z(t)|^pst \min\{s,t\}\,dsdt\\
&=\frac{1}{2}\bigg(\int_0^{\bar{r}_l-\delta}+\int_{\bar{r}_l+\delta}^\infty\bigg)(Z'^{2}+Z^2)t^2dt+\frac{1}{2}\int_{\bar{r}_l-\delta}^{\bar{r}_l+\delta}(Z'^{2}+Z^2)t^2dt\\
&\quad-\frac{1}{2p}\int_0^\infty\int_0^\infty|Z(s)|^p|Z(t)|^pst\min\{s,t\}\,dsdt.
\end{split}
\]
By \eqref{eq:3.10}, we see that
\[
\int_0^{\bar{r}_l-\delta}(Z'^{2}+Z^2)t^2\,dt=\int_0^{\bar{r}_l-\delta}(W'^{2}+W^2)t^2\,dt+o(\delta).
\]
Integrating by part and using \eqref{eq:3.14} and \eqref{eq:3.15}, we obtain that
\[
\begin{split}
&\int_0^{\bar{r}_l-\delta}(W'^{2}+W^2)t^2\,dt+o(\delta)\\
&=W'(\bar{r}_l-\delta)W(\bar{r}_l-\delta)(\bar{r}_l-\delta)^2\\
&+\int_0^{\bar{r}_l-\delta}\int_0^\infty|W(s)|^p|W(t)|^pst \min\{s,t\}\, dsdt\\
&=-\delta (w_-)^{2}\bar{r}_l^2+\int_0^{\bar{r}_l-\delta}\int_0^\infty|W(s)|^p|W(t)|^pst\min\{s,t\}\,dsdt.
\end{split}
\]
Thus,
\begin{equation}\label{eq:3.16}
\begin{split}
&\int_0^{\bar{r}_l-\delta}(Z'^{2}+Z^2)t^2\,dt\\
&=-\delta (w_-)^{2}\bar{r}_l^2+\int_0^{\bar{r}_l-\delta}\int_0^\infty|W(s)|^p|W(t)|^pst\min\{s,t\}\,dsdt+o(\delta).
\end{split}
\end{equation}
In the same way,
\begin{equation}\label{eq:3.17}
\begin{split}
&\int_{\bar{r}_l+\delta}^\infty(Z'^{2}+Z^2)t^2\,dt\\
=&-\delta (w_+)^{2}\bar{r}_l^2+\int_{\bar{r}_l+\delta}^{\infty}\int_0^\infty|W(s)|^p|W(t)|^pst\min\{s,t\}dsdt+o(\delta).
\end{split}
\end{equation}
It is readily to verify that
\begin{equation}\label{eq:3.18}
\int_{\bar{r}_l-\delta}^{\bar{r}_l+\delta}Z'^{2}t^2\,dt=\frac{1}{2}\bar{r}_l^2(w_++w_-)^2\delta+o(\delta)
\end{equation}
and
\begin{equation}\label{eq:3.19}
\int_{\bar{r}_l-\delta}^{\bar{r}_l+\delta}Z^2t^2\,dt=o(\delta).
\end{equation}
From \eqref{eq:3.16}-\eqref{eq:3.19}, we obtain
\begin{equation}\label{eq:3.20}
\begin{aligned}
E(Z)=&-\frac{\delta}{2}(w_-)^{2}\bar{r}_l^2+\frac{1}{2}\int_0^{\bar{r}_l-\delta}\int_0^\infty|W(s)|^p|W(t)|^pst\min\{s,t\}\,dsdt\\
&-\frac{\delta}{2}(w_+)^{2}\bar{r}_l^2+\frac{1}{2}\int_{\bar{r}_l+\delta}^{\infty}\int_0^\infty|W(s)|^p|W(t)|^p st\min\{s,t\}\,dsdt\\
&+\frac{\delta}{4}\bar{r}_l^2(w_++w_-)^2-\frac{1}{2p}\int_0^\infty\int_0^\infty|W(s)|^p|W(t)|^pst\min\{s,t\}\,dsdt\\
&+o(\delta).
\end{aligned}
\end{equation}
Consequently,
\begin{equation}\label{eq:3.21}
\begin{aligned}
&E(Z)-E(W)\\
&=-\frac{\delta}{4}\bar{r}_l^2(w_+-w_-)^2\\
&\quad+\frac{1}{2}\bigg(\int_0^{\bar{r}_l-\delta}+\int_{\bar{r}_l+\delta}^\infty\bigg)\int_0^\infty|W(s)|^p|W(t)|^pst\min\{s,t\}\,dsdt\\
&\quad-\frac{1}{2p}\int_0^\infty\int_0^\infty|W(s)|^p|W(t)|^pst\min\{s,t\}dsdt\\
&\quad-\big(\frac{1}{2}-\frac{1}{2p}\big)\int_0^\infty\int_0^\infty|W(s)|^p|W(t)|^pst\min\{s,t\}dsdt +o(\delta)\\
&=-\frac{\delta}{4}\bar{r}_l^2(w_+-w_-)^2-\frac{1}{2}\int_{\bar{r}_l-\delta}^{\bar{r}_l+\delta}\int_0^\infty|W(s)|^p|W(t)|^pst\min\{s,t\}\,dsdt+o(\delta).\\
\end{aligned}
\end{equation}
This together with the fact
\[
\int_{\bar{r}_l-\delta}^{\bar{r}_l+\delta}\int_0^\infty|W(s)|^p|W(t)|^pst\min\{s,t\}dsdt=o(\delta)
\]
yields
\[
E(Z)-E(W)=-\frac{\delta}{4}\bar{r}_l^2(w_+-w_-)^2+o(\delta)<0
\]
if $\delta>0$ sufficiently small, which contradicts \eqref{eq:3.11}. The proof is complete. $\Box$

\bigskip

\appendix

\section {Non-singularity of matrices}

\bigskip
We show in this section that the matrices $M$ and $N$ defined in \eqref{eq:2.5a} and \eqref{eq:2.10} respectively are nonsingular. For $f, g\in L^1_{loc}(\mathbb{R}^3)$, we recall that the Coulomb energy is defined in \cite{LL} by
\[
D_N(f,g) = \int_{\mathbb{R}^N}\int_{\mathbb{R}^N}f(x)g(y)|x-y|^{2-N}\,dxdy.
\]
It is proved in Theorem 9.8 of  \cite{LL} the following result.

\begin{Lemma}\label{lem:A.1}
(\cite{LL}, Theorem 9.8)
Let $N\geq1$ and $f,g\in L^{\frac{2N}{N+2}}$, then
\[
|D_N(f,g)|^2\leq D_N(f,f)D_N(g,g),
\]
with equality for $g\neq0$ if and only if $f=Cg$ for some constant $C$.
\end{Lemma}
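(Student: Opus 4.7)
The kernel $|x-y|^{2-N}$ is, up to a positive multiplicative factor, the fundamental solution of $-\Delta$ on $\mathbb{R}^N$ (for $N\ge 3$), so one expects $D_N(\cdot,\cdot)$ to be a genuine positive semidefinite bilinear form and the claim to reduce to a Cauchy--Schwarz inequality on the Fourier side. The plan is therefore to obtain a Plancherel-type representation of $D_N$, apply Cauchy--Schwarz in a weighted $L^2$-space, and then read off the equality case.

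For Schwartz data $f,g$ I would use the fact that the Fourier transform of $|x|^{2-N}$ equals $\gamma_N|\xi|^{-2}$ with an explicit constant $\gamma_N>0$, together with Plancherel's theorem, to obtain
\begin{equation*}
D_N(f,g) \;=\; \gamma_N\int_{\mathbb{R}^N}|\xi|^{-2}\,\widehat f(\xi)\,\overline{\widehat g(\xi)}\,d\xi.
\end{equation*}
This identifies $D_N$ with the Hermitian inner product on the weighted Hilbert space $L^2(\mathbb{R}^N;|\xi|^{-2}d\xi)$ pulled back by the Fourier transform; in particular $D_N(f,f)\ge 0$, and the Cauchy--Schwarz inequality on that Hilbert space gives exactly $|D_N(f,g)|^2\le D_N(f,f)\,D_N(g,g)$. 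The standard equality case of Cauchy--Schwarz forces $\widehat f = C\widehat g$ almost everywhere (with respect to $|\xi|^{-2}d\xi$, hence also with respect to Lebesgue measure away from the origin), and injectivity of the Fourier transform on tempered distributions then yields $f = Cg$.

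The main technical obstacle is that under the stated hypothesis $f,g\in L^{2N/(N+2)}$ the Fourier transforms are a priori tempered distributions rather than $L^2$-functions, so the Plancherel identity above is not immediately meaningful. I would close this gap by density: Schwartz functions are dense in $L^{2N/(N+2)}$, and the Hardy--Littlewood--Sobolev inequality supplies the continuity estimate $|D_N(f,g)|\le C\|f\|_{2N/(N+2)}\|g\|_{2N/(N+2)}$, so the Cauchy--Schwarz bound established on Schwartz data passes to the limit in $L^{2N/(N+2)}$. The same HLS-continuity, combined with the closedness of the relation $f=Cg$ under $L^{2N/(N+2)}$-convergence, transfers the equality characterization to general $f,g$ in the class.
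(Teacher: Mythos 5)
The paper offers no proof of this lemma beyond the citation: it is quoted verbatim from \cite{LL}, Theorem 9.8, and the proof there is essentially the one you outline — a Plancherel-type representation of the Coulomb energy (obtained in \cite{LL} by writing $|x|^{2-N}$ as a superposition of Gaussians rather than by invoking $\widehat{|x|^{2-N}}=\gamma_N|\xi|^{-2}$ directly) followed by Cauchy--Schwarz in the weighted space $L^2(\mathbb{R}^N;|\xi|^{-2}d\xi)$. Your treatment of the inequality itself is fine: the Hardy--Littlewood--Sobolev inequality makes $D_N$ continuous on $L^{2N/(N+2)}\times L^{2N/(N+2)}$, so the inequality established for Schwartz data passes to the limit. (Note that your argument, like HLS itself, requires $N\ge 3$; for $N\le 2$ the kernel $|x-y|^{2-N}$ is not positive semidefinite, so the ``$N\ge1$'' in the statement is a transcription issue of the lemma rather than a defect of your proof — only $N=3$ is used in the paper.)

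The genuine gap is the final step: the equality case does not transfer by density. If $f,g\in L^{2N/(N+2)}$ satisfy $|D_N(f,g)|^2=D_N(f,f)D_N(g,g)$, Schwartz approximants $f_n\to f$, $g_n\to g$ will in general satisfy the inequality strictly, so your Schwartz-level characterization never yields relations $f_n=C_ng_n$, and ``closedness of the relation $f=Cg$ under convergence'' has nothing to act on. What is actually needed is to extend the Plancherel representation itself (equivalently, the strict definiteness of $D_N$) from Schwartz functions to the whole class; note in particular that even $D_N(g,g)>0$ for $g\neq 0$ requires this, since otherwise equality could hold trivially as $0=0$ without $f$ being a multiple of $g$. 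The extension is not hard: for $N\ge3$, Hausdorff--Young gives $\widehat f\in L^{2N/(N-2)}$ as a genuine function; the identity on Schwartz functions together with HLS shows $f\mapsto |\xi|^{-1}\widehat f$ is bounded from $L^{2N/(N+2)}$ to $L^2$ and hence extends continuously, and a.e.\ convergence of a subsequence of $\widehat{f_n}$ identifies the extension with the pointwise formula, so $D_N(f,g)=\gamma_N\int |\xi|^{-2}\widehat f\,\overline{\widehat g}\,d\xi$ holds for all $f,g$ in the class. Only then can you apply Cauchy--Schwarz (and its equality case) directly in the weighted $L^2$ space to general $f,g$, concluding $\widehat f=C\widehat g$ a.e.\ and hence $f=Cg$. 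As written, the last sentence of your argument is a non sequitur, and the equality assertion of the lemma is not established.
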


\medskip

Denote $D(f,g)= D_3(f,g)$. Let
$$
\mathcal{A}(\mathbb{R}^3):=\big\{f\in L_{loc}^1(\mathbb{R}^3):D(f,f)<\infty\big\}.
$$

\begin{Lemma}\label{lem:A.2}
$\mathcal{A}(\mathbb{R}^3)$ is  a linear subspace of $L_{loc}^1(\mathbb{R}^3)$ with the inner product $D(f,f)$.
\end{Lemma}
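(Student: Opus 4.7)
The plan is to verify, in order, (i) closure of $\mathcal{A}(\mathbb{R}^3)$ under scalar multiplication, (ii) closure under addition via a Cauchy--Schwarz estimate for $D$, and (iii) the inner-product axioms for $D$; all of (ii) and (iii) ultimately reduce to Lemma \ref{lem:A.1} through a truncation combined with monotone/dominated convergence.

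Scalar multiplication is immediate, since $D(\alpha f, \alpha f) = \alpha^2 D(f, f) < \infty$. For the sum, I would expand
\begin{equation*}
D(f + g, f + g) = D(f, f) + 2 D(f, g) + D(g, g)
\end{equation*}
and control the cross-term by $|D(f, g)| \leq D(|f|, |g|) \leq \sqrt{D(|f|, |f|) \, D(|g|, |g|)}$. The inequality $D(|f|, |g|)^2 \leq D(|f|, |f|) D(|g|, |g|)$ is exactly Lemma \ref{lem:A.1} applied to $|f|, |g|$, but only if $|f|, |g| \in L^{6/5}(\mathbb{R}^3)$, which is not part of the definition of $\mathcal{A}$. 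I would bridge this by truncating: set $f_n := |f| \, \chi_{\{|x| \leq n, \, |f(x)| \leq n\}}$ and analogously $g_n$, both bounded with compact support and hence in $L^{6/5}$. Lemma \ref{lem:A.1} then yields
\begin{equation*}
D(f_n, g_n)^2 \leq D(f_n, f_n) \, D(g_n, g_n) \leq D(|f|, |f|) \, D(|g|, |g|),
\end{equation*}
and monotone convergence (the kernel $|x - y|^{-1}$ is positive and $f_n \uparrow |f|$, $g_n \uparrow |g|$) upgrades this to $D(|f|, |g|)^2 \leq D(|f|, |f|) D(|g|, |g|) < \infty$. Consequently $D(f, g)$ is absolutely convergent, and $D(f + g, f + g) \leq (\sqrt{D(f,f)} + \sqrt{D(g,g)})^2 < \infty$, so $\mathcal{A}(\mathbb{R}^3)$ is closed under addition.

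With absolute convergence of $D(f, g)$ in hand on $\mathcal{A}(\mathbb{R}^3)$, bilinearity follows from linearity of the integral and symmetry from the symmetry of the kernel. For positivity $D(f, f) \geq 0$, I would combine the Fourier identity $D(h, h) = c_3 \int_{\mathbb{R}^3} |\widehat{h}(\xi)|^2 / |\xi|^2 \, d\xi$ (valid for $h \in \mathcal{S}(\mathbb{R}^3)$ by Parseval together with $\widehat{|x|^{-1}} = c_3 |\xi|^{-2}$ as tempered distributions) with a density argument: for general $f \in \mathcal{A}$, the signed truncations $f_n := f \, \chi_{\{|x| \leq n, \, |f(x)| \leq n\}}$ lie in $L^{6/5}$, satisfy $D(f_n, f_n) \geq 0$, and pass to $D(f, f)$ by dominated convergence with dominator $|f(x)||f(y)|/|x-y|$. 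For non-degeneracy, if $D(f, f) = 0$, the Cauchy--Schwarz bound forces $D(f, \varphi) = 0$ for every $\varphi \in C_c^\infty(\mathbb{R}^3) \subset \mathcal{A}$; writing this as $\int \varphi \, V \, dx = 0$ with $V := |x|^{-1} * f$ shows $V \equiv 0$ as a distribution, and since $-\Delta V = 4\pi f$ in $\mathcal{D}'$, we conclude $f = 0$ a.e.

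The principal obstacle is the Cauchy--Schwarz extension in Step (ii), because Lemma \ref{lem:A.1} is stated only for $L^{6/5}$ functions while $\mathcal{A}(\mathbb{R}^3)$ can be strictly larger; the truncation-plus-monotone-convergence device above is tailored to exactly this gap. Once Cauchy--Schwarz is available on $\mathcal{A}$, the remaining inner-product axioms follow by standard arguments, with positivity and non-degeneracy relying only on the Fourier/Newtonian-potential interpretation sketched above.
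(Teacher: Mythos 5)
Your proof is correct and follows essentially the same route as the paper, which simply invokes Lemma \ref{lem:A.1} to bound the cross term in $D(f+g,f+g)$ and declares the verification of the subspace and inner-product properties standard. Your truncation-plus-monotone-convergence device (needed because Lemma \ref{lem:A.1} is stated only for $L^{6/5}$ functions while $\mathcal{A}(\mathbb{R}^3)$ is defined solely by finiteness of the Coulomb energy) and your explicit positivity and non-degeneracy arguments merely fill in details the paper leaves implicit, without changing the underlying approach.
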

\begin{proof} By Lemma \ref{lem:A.1}, for any $f, g \in \mathcal{A}(\mathbb{R}^3)$, we have
\[
D(f+g,f+g)\leq D(f,f)+D(g,g)+2\sqrt{D(f,f)D(g,g)}.
\]
It is then readily to verify that $\mathcal{A}(\mathbb{R}^3)$ is a linear subspace of $L_{loc}^1(\mathbb{R}^3)$.
It is also standard to see that $D(f,g)$ is an inner product in  $\mathcal{A}(\mathbb{R}^3)$.
\end{proof}

Now, we show that the matrices $M$ and $N$ defined in \eqref{eq:2.5a} and \eqref{eq:2.10} respectively are nonsingular.
We only prove the matrix $N$ is nonsingular, since for the matrix $M$, the proof is similar.
\begin{Lemma}\label{lem:A}
The matrix $N$ defined in \eqref{eq:2.10} is nonsingular.
\end{Lemma}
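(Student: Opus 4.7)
The plan is to show that the symmetric matrix $N$ is strictly negative definite as a bilinear form on $\mathbb{R}^{k+1}$, which immediately yields $\det N \neq 0$. The key idea is that the Coulomb form $D(\cdot,\cdot)$ from Lemma~\ref{lem:A.1} is positive semidefinite (by Lemma~\ref{lem:A.2} it is even an inner product on $\mathcal{A}(\mathbb{R}^3)$), and the Nehari-type constraint defining $\mathcal{N}_k$ lets me rewrite the quadratic form $\mathbf{v}^T N \mathbf{v}$ as a strictly negative diagonal piece plus $-p$ times a $D$-norm.

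To execute, I would first extend each $f_i := |u_i|^p$ by zero to all of $\mathbb{R}^3$; by Hardy--Littlewood--Sobolev, $f_i \in \mathcal{A}(\mathbb{R}^3)$. Set $b_{ij} := D(f_i,f_j) \geq 0$, so $b_{ii} > 0$. Since $(u_1,\ldots,u_{k+1}) \in \mathcal{N}_k$, the Nehari identity $\|u_i\|_i^2 = \sum_{j=1}^{k+1} b_{ij}$ holds, which is precisely what underlies the simplification $N_{ii} = (2-p)\|u_i\|_i^2 - pb_{ii}$ already written in the paper. For any $\mathbf{v} = (v_1,\ldots,v_{k+1}) \in \mathbb{R}^{k+1}$, folding the $-pb_{ii}$ piece of $N_{ii}$ together with the off-diagonal entries $N_{ij} = -pb_{ij}$ into one double sum gives
$$
\mathbf{v}^T N \mathbf{v} = (2-p)\sum_{i=1}^{k+1}\|u_i\|_i^2\, v_i^2 \;-\; p\, D\Bigl(\sum_{i=1}^{k+1} v_i f_i,\; \sum_{j=1}^{k+1} v_j f_j\Bigr).
$$
Because $p > 5/2 > 2$, the factor $(2-p)$ is negative, so the first sum is $\leq 0$ and strictly $< 0$ whenever $\mathbf{v} \neq 0$; by Lemma~\ref{lem:A.1}, the Coulomb term is $\geq 0$, so $-p$ times it is $\leq 0$. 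Hence $\mathbf{v}^T N \mathbf{v} < 0$ for every nonzero $\mathbf{v}$, and $N$ is strictly negative definite, which is far more than needed to conclude $\det N \neq 0$.

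There is no genuine obstacle; the only substantive point is the algebraic repackaging of $N$, via the Nehari identity, so that the off-diagonal entries and the ``Coulomb part'' of the diagonal together assemble into the positive-semidefinite form $(\mathbf{v},\mathbf{v}) \mapsto D(\sum_i v_i f_i, \sum_i v_i f_i)$. I would also note in passing that the same computation, with the rescaling $v_i \mapsto \bar{t}_i^p v_i$ absorbed into the $D$-term and the factor $\mu_0 \in [0,1]$ only downweighting the off-diagonal contributions, handles the companion matrix $M$ of \eqref{eq:2.5a} used in Lemma~\ref{lem:2.1}; positivity is preserved uniformly in $\mu_0$, so $\det M \neq 0$ follows by the same template.
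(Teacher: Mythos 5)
Your proof is correct and takes essentially the paper's route: both arguments identify $-N$ (at a point of $\mathcal{N}_k$) as $p$ times the Coulomb Gram matrix $\big(D(|u_i|^p,|u_j|^p)\big)$ plus the strictly positive diagonal $(p-2)\,\mathrm{diag}(\|u_i\|_i^2)$, and conclude definiteness from the positivity of the Coulomb form (Lemmas~\ref{lem:A.1}--\ref{lem:A.2}) together with $p>2$. The only difference is cosmetic: you test the quadratic form directly, so positive semidefiniteness of $D$ suffices, whereas the paper first establishes positive definiteness of $(D_{ij})$ via linear independence and Gram--Schmidt; your closing remark on $M$, splitting off a $(1-\mu_0)$ share of the diagonal Coulomb terms, matches the paper's ``the proof is similar.''
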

\begin{proof} Denote $v_i:=|u_i(x)|^p$. Then $v_i\in \mathcal{A}(\mathbb{R}^3)$, for $i=1,\cdots,k+1$.
Apparently, $v_1,\cdots,v_{k+1}$ are linear independent. Let
\[
L=span\{v_1,\cdots,v_{k+1}\}.
\]
So $L$ is a subspace of $\mathcal{A}(\mathbb{R}^3)$. Denote by $\{e_1,..., e_{k+1}\}$ the orthogonal basis obtained from $\{v_1,\cdots,v_{k+1}\}$ by the Gram-Schmidt Orthogonalization procedure. We may assume  $v_i=\Sigma^{k+1}_{j=1} a_{ij} e_j$ for $i=1,...,k+1$. Then, the matrix
$$
A_{k+1}=\begin{pmatrix}
a_{11} & a_{12} & \cdots & a_{1(k+1)}\\
\vdots   & \vdots & \ddots& \vdots\\
a_{(k+1)1} & a_{(k+1)2} & \cdots & a_{(k+1)(k+1)}
\end{pmatrix}
$$
is invertible.

Denote $D_{ij}= v_iv_j = D(v_i,v_j)$ for $i,j=1,\cdots,k+1$. The matrix $(D_{ij})_{(k+1)\times (k+1)}$ can be written as
\[(
D_{ij})_{(k+1)\times (k+1)}
=\begin{pmatrix}
	v_1\\
	\vdots\\
	v_{k+1}
	\end{pmatrix}
	\begin{pmatrix}
	v_1&v_2 & \cdots & v_{k+1}
	\end{pmatrix}.
\]
Using the fact that $v_i=\Sigma^{k+1}_{j=1} a_{ij} e_j$ for $i=1,...,k+1$ and $(e_1,\cdots, e_{k+1})$ is a orthogonal basis, we deduce
\begin{align*}
&\begin{pmatrix}
	v_1\\
	\vdots\\
	v_{k+1}
	\end{pmatrix}
	\begin{pmatrix}
	v_1&v_2 & \cdots & v_{k+1}
	\end{pmatrix}\\
    &=\begin{pmatrix}
	a_{11} & a_{12} & \cdots & a_{1(k+1)}\\
	\vdots & \vdots & \ddots & \vdots\\
	a_{(k+1)1} & a_{(k+1)2} & \cdots & a_{(k+1)(k+1)}
	\end{pmatrix}\begin{pmatrix}
	a_{11} & a_{21} & \cdots & a_{(k+1)1}\\
	\vdots & \vdots & \ddots & \vdots\\
	a_{1(k+1)} & a_{2(k+1)} & \cdots & a_{(k+1)(k+1)}
	\end{pmatrix}.
\end{align*}
Therefore,
\[
(D_{ij})_{(k+1)\times (k+1)}=A_{k+1} A^T_{k+1}
\]
Since $A_{k+1}$ is invertible, the matrix $(D_{ij})_{(k+1)\times (k+1)}$ is positive definite.

Let $d_i=\|u_i\|^2_i$, $i=1,\cdots,k+1$. It is obvious that
\begin{equation}\label{eq:2.17}
{\rm det}\ N=(-1)^{k+1}{\rm det}\ \widetilde{N},
\end{equation}
where the matrix $\widetilde{N}$ is given by
\begin{align*}
	\widetilde{N}&=\begin{pmatrix}
	pD_{11}+(p-2)d_1 & pD_{12} & \cdots & pD_{1(k+1)}\\
	pD_{21} & pD_{22}+(p-2)d_2& \cdots & pD_{2(k+1)}\\
	\vdots & \vdots & \ddots & \vdots\\
	pD_{(k+1)1} & pD_{(k+1)2}& \cdots & pD_{(k+1)(k+1)}+(p-2)d_{k+1}
	\end{pmatrix}\\
	&=p(D_{ij})_{(k+1)\times (k+1)}+
    (p-2)\begin{pmatrix}
	d_1 &  & & \\
	  & d_2  & & \\
	 &   & \ddots &  \\
	  &   &   & d_{k+1}
	\end{pmatrix}.
\end{align*}
So 	$\widetilde{N}$ is positive definite if $\frac{5}{2}<p<5$ since $d_i>0$ for all $i$ and $(D_{ij})_{(k+1)\times (k+1)}$ is positive definite.
The conclusion then follows.
\end{proof}

\bigskip
\vspace{2mm}
\noindent{\bf Acknowledgment} This work is supported by NNSF of China, No:11271170 and 11371254; GAN
PO 555 program of Jiangxi.


\begin{thebibliography}{00}
\frenchspacing

\bibitem{AS} C. O. Alves and M. A. S. Souto,  Existence of least energy nodal solution
for a Schr\"{o}dinger-Poisson system in bounded domains, {\it Z. Angew. Math. Phys.} 65 (2014), 1153-1166.

\bibitem{BS} C. J. Batkam and J. R. Santos Jnior,  Schr\"{o}dinger-Kirchhoff-Poisson type systems, {\it Commun. Pure Appl. Anal.}, 15 (2016),  429-444.

\bibitem{BW} T. Bartsh and M. Willem, Infinitely many radial solutions of a semilinear elliptic problem on $\mathbb{R}^N$, {\it Arch. Ration. Mech. Anal}. 124 (1993), 261-276.

\bibitem {BGS} C. Bonanno, P. d'Avenia, M. Ghimenti, and M. Squassina, Soliton dynamics for the generalized Choquard equation, {\it J. Math. Anal. Appl}. 417 (2014), 180-199.

\bibitem{CZ} D. Cao, X. Zhu, On the existence and nodal character of semilinear elliptic equations, {\it Acta Math. Sci.}, 8 (1988) 345-359.

\bibitem{CCS} S. Cingolani, M. Clapp, and S. Secchi, Multiple solutions to a magnetic nonlinear Choquard equation, {\it Z. Angew. Math. Phys}. 63 (2012), 233-248.

\bibitem{CCS1} S. Cingolani, M. Clapp, S. Secchi, Intertwining semiclassical solutions to a Schr\"{o}inger-Newton system, {\it Discrete Contin. Dyn. Syst.}, Ser. S 6(2013) 891-908.

\bibitem{CS} S. Cingolani, and S. Secchi, Multiple $\mathbb{S}^1$-orbits for the Schr\"{o}dinger-Newton system, {\it  Differential and Integral Equations}. 26 (2013), 867-884.

\bibitem{CSS} S. Cingolani, S. Secchi and M. Squassina, Semi-classical limit for Schr\"{o}dinger equations with magnetic field and Hartree-type nonlinearities, {\it Proc. Roy. Soc. Edinburgh}, Sect. A 140(5) (2010) 973-1009.



\bibitem{ClapS} M. Clapp and D. Salazar, Positive and sign changing solutions to a nonlinear Choquard equation, {\it J. Math. Anal. Appl}. 407 (2013), 1-15.

\bibitem{DA} J. T. Devreese and A. S. Alexandrov, Advances in polaron physics, Springer Series in Solid-State Sciences, vol. 159, Springer, 2010.

\bibitem{DPS} Y. Deng, S. Peng, W. Shuai, Existence of asymptotic behavior of nodal solutions for the Kirchhoff-type problems, {\it Journal of Functional Analysis} 269 (2015), 3500-3527.

\bibitem{FN} G. M. Figueiredo and R. G. Nascimento,  Existence of a nodal solution with minimal energy for a Kirchhoff equation, {\it Math. Nachr.}, 288 (2015), 48-60.

\bibitem{GS} M. Ghimenti and J. Van Schaftingen, Nodal solutions for the Choquard equation, {\it J. Funct. Anal.}, 271 (2016), 107-135.

\bibitem{KS} S. Kim and J. Seok, On  nodal solutions of nonlinear schr\"{o}dinger-poisson equations, {\it Comm. Contemp. Math.,} Vol. 14 (2012), 1-16.

\bibitem{J} K. R. W. Jones, Newtonian Quantum Gravity, {\it Australian Journal of Physics}. 48 (1995), 1055-1081.

\bibitem{Len} E. Lenzmann, Uniqueness of ground states for pseudo-relativistic Hartree equations, {\it Anal. PDE}. 2 (2009), 1-27.

\bibitem{L} E. H. Lieb, Existence and uniqueness of the minimizing solution of Choquard's nonlinear equation, {\it Studies in Appl. Math}. 57 (1976/77), 93-105.

\bibitem{LL} E. Lieb, M. Loss, Analysis, Grauates Studies in Mathematics, vol.14. AMS, 1997.

\bibitem{Lions} P.-L. Lions, The Choquard equation and related questions, {\it Nonlinear Anal.}, 4(1980), 1063-1072.

\bibitem{Lions1} P.-L. Lions, The concentration-compactness principle in the calculus of variations. The
locally compact case. I, {\it Ann. Inst. H. Poincar\'{e} Anal. Non Lin\'{e}aire} 1(1984), 109-145.

\bibitem{M} G. P. Menzala, On regular solutions of a nonlinear equation of Choquard's type,
{\it Proc. Roy. Soc. Edinburgh}, Sect. A 86 (1980), 291-301.

\bibitem{MPT} I. M. Moroz, R. Penrose, and P. Tod, Spherically-symmetric solution of the Schr\"{o}dinger-Newton equation, {\it Classical Quantum Gravity}. 15 (1998), 2733-2742.

\bibitem{MS} V. Moroz and J. Van Schaftingen, Groundstates of nonlinear Choquard equations:existence, qualitative properties and decay asymptotics, {\it J. Funct. Anal}. 265 (2013), 153-184.

\bibitem{MS1}V. Moroz and J. Van Schaftingen, Existence of groundstates for a class of nonlinear Choquard equations. {\it Trans. Amer. Math. Soc.}, 367 (2015), 6557-6579.

\bibitem{MS2} V. Moroz and J. Van Schaftingen, Semi-classical states for the Choquard equation. {\it  Calc. Var. Partial Differential Equations}, 52 (2015), 199-235.

\bibitem{P}  S. Pekar, Untersuchung \"{u}ber die Elektronentheorie der Kristalle, Akademie Berlag, Berlin, 1954.

\bibitem{S} W. A. Strauss, Existence of solitary waves in higher dimensions, {\it Comm. Math. Phys}. 55 (1977), 149-162.

\bibitem{WZ} Z. Wang, H. Zhou, Sign-changing solutions for the nonlinear Schr\"{o}inger-Poisson system in $R^3$, {\it Calc. Var. PDE}, 52(2015), 927-943.

\bibitem{W}  M. Willem, Minimax Theorems, Birkh\"{a}user, Basel, 1996.












\end{thebibliography}
\end{document}